\def\Z{{\mathbb Z}}
\def\R{{\mathbb R}}
\newtheorem{thm}{Theorem}[section]
\newtheorem{lem}[thm]{Lemma}
\newtheorem{cor}[thm]{Corollary}
\newtheorem{que}[thm]{Question}
\newtheorem*{thm*}{Theorem}
\theoremstyle{remark}
\newtheorem{rem}[thm]{Remark}
\theoremstyle{definition}
\numberwithin{equation}{section}
\title{Vanishing of all equivariant obstructions and the mapping degree}
\author{Sergey~Avvakumov{$^\spadesuit$}}
\author{Sergey~Kudrya{$^\clubsuit$}}
\thanks{{$^\spadesuit$} Has received funding from the Austrian Science Fund (FWF), Project P31312-N35, and the European Research Council under the European Union’s Seventh Framework Programme ERC Grant agreement ERC StG 716424 - CASe.}
\thanks{{$^\clubsuit$} Supported by the Austrian Academic Exchange Service (OeAD), ICM-2019-13577}
\address{Sergey~Avvakumov, Department of Mathematical Sciences, University of Copenhagen, Copenhagen, Denmark}
\email{savvakumov@gmail.com}
\address{Sergey~Kudrya, Moscow Institute of Physics and Technology, Institutskiy per. 9, Dolgoprudny, Russia 141700}
\email{sergeykudrya@bk.ru}
\subjclass[2010]{55M25, 55M35, 55S91}
\keywords{Envy-free divisions, Equivariant mapping degree, Equivariant obstruction}
\begin{document}

\begin{abstract}
Suppose that $n$ is not a prime power and not twice a prime power. We prove that for any Hausdorff compactum $X$ with a free action of the symmetric group $\mathfrak S_n$ there exists an $\mathfrak S_n$-equivariant map $X \to \R^n$ whose image avoids the diagonal $\{(x,x\dots,x)\in \R^n|x\in \R\}$.

Previously, the special cases of this statement for certain $X$ were usually proved using the equivartiant obstruction theory. Such calculations are difficult and may become infeasible past the first (primary) obstruction. We take a different approach which allows us to prove the vanishing of all obstructions simultaneously. The essential step in the proof is classifying the possible degrees of $\mathfrak S_n$-equivariant maps from the boundary $\partial\Delta^{n-1}$ of $(n-1)$-simplex to itself.

Existence of equivariant  maps between spaces is important for many questions arising from discrete mathematics and geometry, such as Kneser's conjecture, the Square Peg conjecture, the Splitting Necklace problem, and the Topological Tverberg conjecture, etc. We demonstrate the utility of our result applying it to one such question, a specific instance of envy-free division problem.

\end{abstract}

\maketitle

\section{Main result}


The well-known framework of configuration spaces, test maps, and equivariant obstructions has been an extremely fruitful method for dealing  with questions arising from discrete mathematics  and geometry, and has been used with great success to solve a variety of problems, including Kneser's conjecture \cite{lovasz1978kneser}, the Square Peg conjecture for smooth curves \cite{shnirel1944certain}, the Splitting Necklace problem \cite{alon1987splitting}, and the Topological Tverberg conjecture \cite{barany1981topological, ozaydin1987equivariant, volovikov1996topological}, etc. 

Let us briefly describe the framework: One starts with defining a suitable configuration space of potential solutions to the problem. Then an appropriate test map from the configuration space to the test space is defined. Informally, the test map measures how far the given potential solution is from the target, a subspace of the test space. A point in the configuration space is a valid solution to the problem if and only if its image under the test map intersects (``hits'') the target. Typically, a certain symmetry group defined by the problem acts both on configuration and test spaces, and the test map is equivariant with respect to this action. So, one can now restate the problem in topological terms: Is it true that any equivariant map from the configuration space to the test space hits the target? If the answer is ``yes'', then the original problem always has a solution. 

The ``straightforward'' way to answer this topological question is to compute a series of equivariant topological obstructions, the larger the difference between the dimensions of the configuration and test spaces are, the longer the series. In practice, computing even the second obstruction may be very challenging if not impossible.

Often, the symmetry group of the problem is the symmetric group $\mathfrak S_n$; and the corresponding test space is $\R^n$ (or $(\R^n)^{\oplus d}$) with the diagonal $D_n:=\{(x,x\dots,x)\in \R^n|x\in \R\}$ (or $(D_n)^{\oplus d}$) as the target. For example, this is the case in the Splitting Necklace problem and the Topological Tverberg conjecture mentioned above, and fair or envy-free  division problems \cite{akopyan2018convex, avvakumov2019envy, blagojevic2014convex, gale1984equilibrium, karasev2014convex, meunier2019envy, segal2018fairly}. 

For this popular combination of the symmetry group and the test space, our main result allows to bypass the difficult calculations of the equivariant obstruction entirely:

\begin{thm}
\label{thm:cor_main}
Suppose that $n$ is not a prime power and not twice a prime power.
Then for any Hausdorff compactum $X$ with a free action
of $\mathfrak S_n$ there exists an equivariant map $X \to \R^n\setminus D_n$.
\end{thm}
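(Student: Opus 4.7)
My approach has three parts. First, the target $\R^n\setminus D_n$ is $\mathfrak S_n$-equivariantly homotopy equivalent to the unit sphere in the trace-zero hyperplane $\{x\in\R^n:\sum x_i=0\}$, which I would model as $\partial\Delta^{n-1}$ with $\mathfrak S_n$ permuting vertices. A standard approximation reduces a general Hausdorff compactum with free $\mathfrak S_n$-action to a free $\mathfrak S_n$-CW complex, so the problem becomes: construct an $\mathfrak S_n$-equivariant map $X\to\partial\Delta^{n-1}$ for every such $X$, or equivalently a section of the Borel-construction bundle $E\mathfrak S_n\times_{\mathfrak S_n}\partial\Delta^{n-1}\to B\mathfrak S_n$.

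Second, following the hint in the abstract, I would isolate the single invariant that controls the problem: the subgroup $d_n\Z\subseteq\Z$ of degrees realised by $\mathfrak S_n$-equivariant self-maps of $\partial\Delta^{n-1}$. The plan is to show that a degree-$0$ equivariant self-map $\phi$ is enough to produce the desired map $X\to\partial\Delta^{n-1}$: since $\phi$ is non-equivariantly null-homotopic it extends over $\Delta^{n-1}$, and one can spread such extensions over the equivariant cells of $X$ via an equivariant mapping-cylinder/join construction driven by $\phi$ itself, simultaneously killing every primary and secondary obstruction.

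Third, to produce the degree-$0$ self-map I would exploit the number-theoretic fact that when $n$ is neither a prime power nor twice a prime power, no Sylow subgroup $P\leq\mathfrak S_n$ acts transitively on $\{1,\dots,n\}$. For each prime $p\mid n!$ the intransitivity of a Sylow $p$-subgroup gives a $P$-equivariant retraction of $\partial\Delta^{n-1}$ onto a proper subcomplex; averaging over cosets in $\mathfrak S_n/P$ together with $p$-local degree computations should yield $\mathfrak S_n$-equivariant self-maps whose degrees are divisible by arbitrarily high powers of $p$, and combining over primes then gives a self-map of degree~$0$.

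The main obstacle I expect is the bridge between these steps: rigorously turning a single degree-$0$ self-map into vanishing of all higher Bredon obstructions without computing them. Classical obstruction theory does not package higher obstructions in terms of one piece of primary data, so the argument most likely needs an explicit cellular construction in which the equivariant null-homotopy of $\phi$ is propagated through successive equivariant skeleta of $X$, which is precisely why the authors emphasise that their method ``proves the vanishing of all obstructions simultaneously''.
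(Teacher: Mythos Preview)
Your first two steps match the paper's strategy: reduce to $\partial\Delta^{n-1}$ and invoke the lemma (Lemma~\ref{lem:Vol} in the paper, proved in \cite{avvakumov2019envy}) that a single degree-$0$ equivariant self-map of the sphere already yields an equivariant map from any free $X$. So the ``main obstacle'' you flag at the end is not the real difficulty; that bridge is a clean general fact and does not require any cell-by-cell obstruction analysis.

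The genuine gap is in your third step. First, a small but telling slip: the set of degrees of $\mathfrak S_n$-equivariant self-maps of $\partial\Delta^{n-1}$ is \emph{not} a subgroup $d_n\Z$; for $n=p^k$ odd it is the coset $1+p\Z$ (Theorem~\ref{thm:main}(a)), so asking when $0$ lies in it is subtler than divisibility. More seriously, your Sylow-intransitivity heuristic cannot work: a Sylow $p$-subgroup of $\mathfrak S_n$ is transitive on $\{1,\dots,n\}$ if and only if $n$ is a power of $p$, so for \emph{every} non-prime-power $n$ --- including $n=2p^k$ --- all Sylow subgroups are intransitive. Your argument never sees the hypothesis ``not twice a prime power'', yet by Theorem~\ref{thm:main}(b) the degree of any $\mathfrak S_n$-equivariant self-map is $\pm1\pmod p$ when $n=2p^k$, so degree $0$ is genuinely unattainable there. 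Any correct construction must therefore use that extra hypothesis in an essential way, and yours does not. Finally, ``averaging over cosets in $\mathfrak S_n/P$'' makes sense for linear targets but not for sphere-valued maps; there is no induction functor taking a $P$-equivariant map $\partial\Delta^{n-1}\to\partial\Delta^{n-1}$ to an $\mathfrak S_n$-equivariant one, and even at the level of degrees the index $[\mathfrak S_n:P]$ is coprime to $p$, so you would not gain $p$-divisibility this way.

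The paper's route is quite different: it parametrises equivariant self-maps by local modifications near orbit representatives with stabiliser $\mathfrak S_k\times\mathfrak S_{n-k}$ (Lemma~\ref{lem:mod}), reduces the local problem to degrees of $(\mathfrak S_k\times\mathfrak S_{n-k})$-equivariant maps of the join $\Sigma_k*\Sigma_{n-k}$ (Lemma~\ref{lem:join} and its corollaries), and then proves a bespoke number-theoretic lemma (Lemma~\ref{lem:representation_of_zero}) showing that, precisely when $n$ is not a prime power and not twice a prime power, one can write $0=1-\sum_k d_k\binom{n}{k}$ with each $d_k$ satisfying the congruence constraints forced by the join factors. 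The ``not twice a prime power'' hypothesis enters exactly in that lemma, via Lucas's theorem.
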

The space $X$ in the statement is a substitute for the configuration space. The restrictions on $X$ are not significant, in practice a configuration space can usually be equivariantly contracted to a compact polyhedron.

Theorem \ref{thm:cor_main} fails when $n$ is a prime power, see Theorem \ref{thm:dold} below. As far as know it is an open question whether Theorem \ref{thm:cor_main} also fails when $n$ is twice a prime power, but we conjecture that it does.

The following similar theorem was recently proved in \cite{avvakumov2019stronger} and applied to get new and stronger counterexamples to the Topological Tverberg conjecture. To state the theorem we will first need the notation:
$$D_{d,n}:=\{(x,x\dots,x)\in \R^{d\times n}|x\in \R^d\}.$$

\begin{thm}
\label{thm:tvr}
Suppose that $n$ is not a prime power.
Then for any Hausdorff compactum $X$ with a free action
of $\mathfrak S_n$ there exists an equivariant map $X \to \R^{2\times n}\setminus D_{2,n}$.
\end{thm}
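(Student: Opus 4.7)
The plan is to reduce the statement to vanishing of equivariant obstructions, exploiting the join decomposition of the target and leveraging the non-prime-power hypothesis through a Sylow transfer argument.

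\emph{Step 1: Reduction to finite complexes.} First, I would reduce to the case where $X$ is a finite-dimensional free $\mathfrak S_n$-CW complex. By the Schwarz genus argument, any Hausdorff compactum with a free action of a finite group $G$ admits a $G$-equivariant map into a sufficiently iterated join $G * \cdots * G$, which is itself a finite-dimensional free $G$-CW complex; pulling back along such a map reduces the problem to this combinatorial setting.

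\emph{Step 2: Join decomposition of the target.} Let $W_n := \{(x_1,\dots,x_n) \in \R^n : \sum_i x_i = 0\}$ denote the standard $(n-1)$-dimensional representation of $\mathfrak S_n$. The complement $\R^{2\times n} \setminus D_{2,n}$ equivariantly deformation retracts onto the unit sphere $S(W_n \oplus W_n)$, which is equivariantly homeomorphic to the join $S(W_n) * S(W_n)$. Consequently, an equivariant map $X \to S(W_n \oplus W_n)$ is the same data as a pair of equivariant maps $f_0, f_1 : X \to W_n$ with $f_0^{-1}(0) \cap f_1^{-1}(0) = \emptyset$: given such a pair, the combined map $(f_0, f_1)$ lands in $W_n \oplus W_n \setminus \{0\}$ and can be normalized to the sphere.

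\emph{Step 3: Sylow fixed points.} For every prime $p$ and every Sylow $p$-subgroup $P_p \leq \mathfrak S_n$, the $P_p$-orbits on $\{1,\dots,n\}$ have sizes that are powers of $p$ summing to $n$; since $n$ is not a power of $p$, there are at least two orbits, and the appropriately weighted difference of their indicator functions is a nonzero $P_p$-fixed vector in $W_n$. Hence $(W_n \oplus W_n)^{P_p}$ is positive-dimensional, the fixed sphere $S(W_n \oplus W_n)^{P_p}$ is nonempty, and any point of it is the value of a constant $P_p$-equivariant map from $X$. Thus $P_p$-equivariant maps into the target exist trivially for every $p$.

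\emph{Step 4: Assembly by transfer.} Finally, promote the local $P_p$-equivariant maps to a single $\mathfrak S_n$-equivariant map via a Sylow transfer argument within Bredon equivariant obstruction theory. The obstruction to extending an $\mathfrak S_n$-equivariant partial map over the next skeleton of $X$ is a torsion class whose $p$-primary part injects, via the standard double-coset/transfer formula, into the corresponding obstruction for $P_p$; since $P_p$-equivariant maps exist for every $p$ by Step 3, each Sylow-level obstruction is killable, so the $p$-primary part vanishes for every prime and the full obstruction vanishes. The main obstacle is precisely this last step: carrying out the vanishing through the \emph{entire} tower of higher obstructions, not just the primary one, requires propagating the Sylow-local choices coherently through the skeletal induction, which ultimately rests on the fact that the coefficient systems arising from the (finite) homotopy groups of the simply-connected sphere $S^{2n-3}$ respect the $p$-primary decomposition.
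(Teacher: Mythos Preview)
The paper does not give its own proof of this theorem; it is quoted from \cite{avvakumov2019stronger}. What the present paper does prove is the sharper Theorem~\ref{thm:cor_main} (for $n$ neither a prime power nor twice a prime power), by an entirely different route: one classifies the possible degrees of $\mathfrak S_n$-equivariant self-maps of $\partial\Delta^{n-1}\simeq S(W_n)$, shows that degree~$0$ occurs, and then invokes Lemma~\ref{lem:Vol} to produce a map from \emph{any} free $\mathfrak S_n$-compactum into $S(W_n)\simeq\R^n\setminus D_n\subset \R^{2\times n}\setminus D_{2,n}$. No skeletal obstruction computation is performed. So your \"Ozaydin-style transfer approach is genuinely different from what the paper does.

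Your Steps~1--3 are fine, but Step~4 has a real gap beyond what you flag. First, a minor point: the primary obstruction lies in $H^{2n-2}_{\mathfrak S_n}(X;\pi_{2n-3}(S^{2n-3}))$ with coefficients $\Z$, which is not torsion; the transfer argument still kills it because $\gcd_p[\mathfrak S_n:P_p]=1$, but your ``torsion class / finite homotopy groups'' phrasing is inaccurate at this stage. The more serious issue is the induction through higher obstructions. To run the transfer you need $\mathrm{res}_{P_p}$ of the \emph{specific} $G$-obstruction class to vanish, i.e.\ that the particular partial $G$-map $f\colon X^{(k)}\to S(W_n\oplus W_n)$ you have already built extends one step further \emph{as a $P_p$-map}. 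Knowing only that \emph{some} $P_p$-map $X\to S(W_n\oplus W_n)$ exists (the constant map to a fixed point) does not give this: it says some other $P_p$-map on a lower skeleton extends, not that your $f$ does. In \"Ozaydin's original Tverberg setting the dimension of the configuration space exceeds that of the target sphere by exactly one, so only the primary obstruction is in play and this difficulty never arises; here $X$ is allowed to have arbitrarily large dimension (your Step~1 reduction to $(\mathfrak S_n)^{*N}$ does not bound it), so you face the full tower and must supply an argument linking the global $P_p$-map to your inductively constructed $f$. The degree-zero self-map plus Lemma~\ref{lem:Vol} mechanism used in this paper sidesteps that difficulty entirely.
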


Note that $\R^n\setminus D_n=\R^{1\times n}\setminus D_{1,n}$ and that there is a natural equivariant inclusion 
\begin{equation}
\label{eq:inc}
\R^{d\times n}\setminus D_{d,n}\subset \R^{d'\times n}\setminus D_{d',n} \text{\quad when \quad } d < d'.
\end{equation}

So, Theorem \ref{thm:tvr} follows from Theorem \ref{thm:cor_main} when $n$ is not twice a prime power. On the other hand, when $n$ is twice a prime power Theorem \ref{thm:tvr} holds, while Theorem \ref{thm:cor_main} remains an open question, see Question \ref{que:2pk}.

From the point of view of the Topological Tverberg conjecture, our Theorem \ref{thm:cor_main} does not add anything new compared to Theorem \ref{thm:tvr}. The reason is that to build counterexamples to the conjecture in $\R^d$ one needs an equivariant map to $\R^{d\times n}\setminus D_{d,n}$. In light of the inclusion \eqref{eq:inc}, Theorem \ref{thm:tvr} already provides the required equivariant maps for all $d\geq 2$.

Theorem \ref{thm:cor_main} follows as the combination of two other statements, which are also useful by themselves:

\begin{lem}
\label{lem:Vol}
Let $G$ be a finite group and $S$ be a sphere with an action of $G$. If there exists an
equivariant map $f: S \to S$ of zero degree then any Hausdorff compactum $X$ with a free action
of $G$ has an equivariant map $X \to S$.
\end{lem}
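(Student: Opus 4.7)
Plan: I would handle the lemma in two stages: first establish it for finite free $G$-CW complexes by induction on dimension, then bootstrap to arbitrary Hausdorff compacta via equivariant nerve approximation.

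The pivotal observation is that $\deg f = 0$ forces $f$ to be non-equivariantly null-homotopic (by the Hopf degree theorem for self-maps of spheres), so the induced map $f_\ast \colon \pi_k(S) \to \pi_k(S)$ is zero for \emph{every} $k$, not only for $k = \dim S$. This is what makes the argument uniform in the dimension. For a finite free $G$-CW complex $P$, I would construct equivariant maps $g_d \colon P^{(d)} \to S$ by induction on $d$. The base case $d=0$ is immediate: choose one vertex per free orbit, send it anywhere in $S$, and extend equivariantly. For the step from $d-1$ to $d$, replace $g_{d-1}$ by the still-equivariant $f \circ g_{d-1}$; then for each $G$-orbit of $d$-cells with representative attaching map $\varphi \colon S^{d-1} \to P^{(d-1)}$, the class $[f \circ g_{d-1} \circ \varphi] \in \pi_{d-1}(S)$ lies in the image of $f_\ast$, hence is zero, so the composite extends to a map $D^d \to S$. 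Free transitivity of $G$ on the orbit then propagates the extension equivariantly across the orbit; doing so for every orbit produces $g_d$.

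For a general Hausdorff compactum $X$ with free $G$-action, I would use freeness to cover $X$ by equivariant open sets $U$ with $gU \cap U = \emptyset$ for $g \neq 1$. Compactness provides a finite subcover $\mathcal U$; its nerve $N(\mathcal U)$ is a finite simplicial $G$-complex, and a single barycentric subdivision makes the action free as a $G$-CW complex. An equivariant partition of unity subordinate to $\mathcal U$ defines an equivariant map $X \to N(\mathcal U)$, and composing it with the map $N(\mathcal U) \to S$ supplied by the first stage finishes the argument. The main obstacle, as I see it, is purely conceptual: one must recognise that the hypothesis ``$\deg f = 0$'' gives a uniform trivialisation of all higher obstructions --- via the fact that $f$ itself, and not merely its action on the top homotopy group, is null-homotopic --- and hence that no Postnikov-style machinery is needed beyond a single composition with $f$ at each inductive step.
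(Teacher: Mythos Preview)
The paper does not prove Lemma~\ref{lem:Vol} at all: it simply refers the reader to \cite{avvakumov2019envy} for a proof and some history. So there is no ``paper's own proof'' to compare against in the strict sense.

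Your argument is correct. The crux --- that $\deg f=0$ forces $f$ to be (non-equivariantly) null-homotopic, so that precomposing the partially-defined map with $f$ kills the attaching class in $\pi_{d-1}(S)$ at every stage of the cell-by-cell induction --- is exactly the right idea, and the extension over a free $G$-orbit of cells is routine. The reduction from a general compact Hausdorff free $G$-space to a finite free $G$-CW complex via an equivariant nerve/partition-of-unity argument is standard and goes through as you describe; in fact, because each $U$ in your cover satisfies $gU\cap U=\emptyset$ for $g\neq 1$, the $G$-action on the simplices of $N(\mathcal U)$ is already free (no barycentric subdivision is needed, though it does no harm).

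One remark connecting to the paper: the authors observe (Remark after Question~\ref{que:2pk}) that the proof in \cite{avvakumov2019envy} yields the stronger conclusion that there is an equivariant map $X*S\to S$, where the action on $X*S$ is no longer free. Your inductive scheme actually recovers this too with a minor tweak: take the identity on the $S$-factor as the starting data and run the induction over the remaining cells of $N(\mathcal U)*S$, all of which have trivial stabiliser since the $N(\mathcal U)$-factor contributes a free cell. So your approach is entirely in the spirit of the cited proof.
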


See a proof of Lemma \ref{lem:Vol} and some historical remarks in \cite{avvakumov2019envy}.

To get Theorem \ref{thm:cor_main}, we would like to apply the lemma with $\mathfrak S_n$ and $\partial \Delta^{n-1}$ as $G$ and $S$, respectively. Then notice that $\partial \Delta^{n-1}$ is $\mathfrak S_n$-equivariantly homotopically equivalent to $\R^n\setminus D_n$. It remains to prove that there exists a $\mathfrak S_n$-equivariant map $\partial \Delta^{n-1}\to \partial \Delta^{n-1}$ of zero degree which is required for the application of the lemma. Instead of proving only that, we give an (incomplete) classification of all possible degrees of  $\mathfrak S_n$-equivariant maps $\partial \Delta^{n-1}\to \partial \Delta^{n-1}$:

\begin{thm}
\label{thm:main}
For $n>1$ consider the boundary $\partial \Delta^{n-1}$ of a standard simplex with the natural action of the symmetric group $\mathfrak S_n$ permuting the vertices.
Let $d$ be the degree of a $\mathfrak S_n$-equivariant map $\partial \Delta^{n-1}\to \partial \Delta^{n-1}$. Then:

\begin{enumerate}[label={(\alph*)}]
\item if $n=p^k$ for some prime $p\neq 2$ then $d$ can attain any value $d\equiv 1\pmod{p}$ and only such values,
\item if $n=2p^k$ for some prime $p$ then $d$ can only attain values $d\equiv \pm 1\pmod{p}$,
\item if $n$ is odd and $n \neq p^k$ for all primes $p$ then $d$ can attain any value,
\item if $n$ is even and $n \neq 2p^k$ for all primes $p$ then $d$ can attain $0$.
\end{enumerate}
\end{thm}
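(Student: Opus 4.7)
My plan splits the proof into upper bounds (the ``only such'' parts of (a) and (b)) and realizations (the attainment claims in (a), (c), (d)).

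For the upper bounds I would induct on $k$ via mod-$p$ Smith theory. The base case of (a), $n=p$, is Dold's theorem: the $p$-cycle generates a $\Z/p\subset\mathfrak S_p$ acting freely on $\partial\Delta^{p-1}\cong S^{p-2}$, forcing $\deg(f)\equiv 1\pmod p$. For the inductive step, pick $\tau\in\mathfrak S_{p^k}$ a product of $p^{k-1}$ disjoint $p$-cycles; its fixed set on $\partial\Delta^{p^k-1}$ is canonically $\partial\Delta^{p^{k-1}-1}$, and the centralizer of $\tau$ in $\mathfrak S_n$ contains an $\mathfrak S_{p^{k-1}}$ permuting the $p$-cycle blocks and acting naturally on this fixed set. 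Smith theory gives $\deg(f)\equiv\deg(f|_{\operatorname{Fix}(\tau)})\pmod p$, and applying the induction hypothesis to the $\mathfrak S_{p^{k-1}}$-equivariant restriction closes the step. Case (b), $n=2p^k$, is analogous: a product of $2p^{k-1}$ disjoint $p$-cycles (or $2^k$ disjoint transpositions when $p=2$) reduces to smaller $n$ of the same form, bottoming out in the $\mathfrak S_2$-equivariant self-maps of $S^0$ whose degrees are $\pm 1$, delivering $\deg(f)\equiv\pm 1\pmod p$.

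For the realization in (a), the identity gives $d=1$. To reach every other $d\equiv 1\pmod p$ my plan is to modify the identity symmetrically by inserting local degrees over chosen $\mathfrak S_n$-orbits in $\partial\Delta^{n-1}$. A Kummer-type calculation shows every orbit has multinomial size $\binom{p^k}{\lambda}$ divisible by $p$, so each insertion shifts the degree by a multiple of $p$; combining orbits whose sizes have $\gcd$ exactly $p$ (for instance $\binom{p^k}{p^k-1,1}=p^k$ together with $\binom{p^k}{p^k-3,3}$ of $p$-adic valuation one) realizes the full residue class $1+p\Z$.

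The core step, and the main obstacle, is the construction of a degree-$0$ $\mathfrak S_n$-equivariant self-map of $\partial\Delta^{n-1}$ in cases (c) and (d). Using a coprime factorization $n=ab$ with $a,b>1$ supplied by the hypothesis, my plan is to construct an equivariant map factoring through a proper $\mathfrak S_n$-invariant subcomplex of $\partial\Delta^{n-1}$ of dimension strictly less than $n-2$, which automatically has degree $0$. Existence of such a factorization is an equivariant obstruction-theoretic problem, and the essential point is the simultaneous vanishing of all obstructions at every prime: under the hypothesis on $n$, no $p$-subgroup of $\mathfrak S_n$ fits the Smith-theoretic pattern of cases (a) or (b), so the obstructions to equivariant retraction vanish for every prime at once. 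This ``all at once'' strategy — bypassing the one-by-one computation of the obstructions — is precisely the new method the introduction advertises. Once degree $0$ is realized, the full realization claim in (c) follows by combining it with equivariant perturbations analogous to those of part (a), which together populate the rest of $\Z$.
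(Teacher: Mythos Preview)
Your Smith-theoretic induction for the upper bounds in (a) and (b) is a legitimate alternative to the paper's method (the paper instead uses its structural Lemma~\ref{lem:mod}, which expresses any achievable degree as $1-\sum d_i\,|G|/|G_i|$ over orbits, together with Lucas's theorem on multinomial orbit sizes). Your realization sketch for (a) also has a gap you pass over: when you ``insert a local degree'' at an orbit whose stabilizer is $G_k=\mathfrak S_k\times\mathfrak S_{n-k}$, the local modification must itself be $G_k$-equivariant, so not every integer is automatically available. The paper supplies this missing step (Lemma~\ref{lem:join} and its corollaries): for odd $n$ one of $k,n-k$ is even, so the antipodal map on that factor of $\Sigma_k*\Sigma_{n-k}$ yields a $G_k$-equivariant degree~$-1$ map, from which all local degrees follow. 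Without this your gcd argument on orbit sizes does not close.

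The serious problem is your plan for (c) and (d). It is not a proof: you assert that the equivariant obstructions to factoring through a lower-dimensional invariant subcomplex vanish ``because no $p$-subgroup fits the Smith pattern'', but the absence of a Smith-theoretic constraint on the degree is not the same as the vanishing of obstruction classes, and you offer no computation. You have also inverted the paper's logic. The ``all at once'' method the introduction advertises is precisely the opposite of what you describe: the paper \emph{avoids} obstruction theory by proving the degree classification through explicit construction, and only then uses the existence of a degree-$0$ map (via Lemma~\ref{lem:Vol}) to conclude, as a \emph{corollary}, that the obstructions in applications vanish. You are invoking the conclusion to prove the premise. The paper's actual argument for (d) is the most delicate part: when $n$ is even, both $k$ and $n-k$ can be odd, so arbitrary local degrees are \emph{not} available at every orbit; one must solve $1-\sum_k d_k\binom{n}{k}=0$ subject to the realizability constraints (for instance $d_{q^\alpha}\equiv 1\pmod q$ or $d_{q^\alpha}=0$), which is the content of the number-theoretic Lemma~\ref{lem:representation_of_zero}. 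Nothing in your outline corresponds to this.
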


Only parts (c) and (d) of Theorem \ref{thm:main} are required to prove Theorem \ref{thm:cor_main}. The ``only'' part of Theorem \ref{thm:main}(a) was probably known before, and Theorem \ref{thm:main}(c) was first proved in \cite{avvakumov2019envy}. For the reader's convenience we prove Theorem \ref{thm:main} in full which takes up most of the paper.

The (almost) converse of Theorem \ref{thm:cor_main} holds for $n$ a prime power:

\begin{thm}
\label{thm:dold}
Suppose that $n$ is a prime power. Let $X$ be an $(n-2)$-connected topological space with an action of $\mathfrak S_n$. Then there is no $\mathfrak S_n$-equivariant map $X \to \R^n\setminus D_n$. 
\end{thm}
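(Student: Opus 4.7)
The plan is to derive a contradiction via a Borel-construction Euler-class argument, after restricting the $\mathfrak{S}_n$-action to a carefully chosen elementary abelian $p$-subgroup.

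Write $n = p^k$ and identify $\{1, \ldots, n\}$ with the underlying set of $E := (\Z/p)^k$, so that $E$ embeds in $\mathfrak{S}_n$ as the group of left translations. Since $\R^n \setminus D_n$ is $\mathfrak{S}_n$-equivariantly homotopy equivalent to the unit sphere $S(V)$ of the reduced permutation representation $V := \R^n/D_n$, any $\mathfrak{S}_n$-equivariant map $X \to \R^n \setminus D_n$ restricts to an $E$-equivariant map $X \to S(V)$, where $V$ is now the reduced regular representation of $E$. It suffices to rule out such an $E$-equivariant map.

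Suppose such an $f : X \to S(V)$ exists, and form the Borel fibration $X_E := EE \times_E X \to BE$. Then $f$ yields a nowhere-vanishing section of the rank-$(n-1)$ vector bundle $\xi$ over $X_E$ obtained by pulling back the universal bundle $EE \times_E V \to BE$ along $X_E \to BE$. Consequently, the mod-$p$ Euler class of $\xi$, namely the pullback of $e(V) \in H^{n-1}(BE; \mathbb{F}_p)$ (interpreted as $w_{n-1}(V)$ when $p=2$), must vanish in $H^{n-1}(X_E; \mathbb{F}_p)$. On the other hand, $(n-2)$-connectivity of $X$ gives $H^t(X; \mathbb{F}_p) = 0$ for $0 < t \leq n-2$, which kills every differential into or out of position $(n-1, 0)$ in the Serre spectral sequence of $X \to X_E \to BE$. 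Hence $E_\infty^{n-1, 0} = E_2^{n-1, 0} = H^{n-1}(BE; \mathbb{F}_p)$ embeds as the bottom filtration piece of $H^{n-1}(X_E; \mathbb{F}_p)$, so the pullback $H^{n-1}(BE; \mathbb{F}_p) \to H^{n-1}(X_E; \mathbb{F}_p)$ is injective --- a contradiction provided $e(V) \neq 0$.

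The main obstacle is therefore the nonvanishing of $e(V)$ in $H^{n-1}(BE; \mathbb{F}_p)$. I would decompose $V$ into real irreducibles of $E$: for $p$ odd, $V$ is a sum of $(n-1)/2$ two-dimensional summands indexed by the pairs $\{a, -a\}$ with $0 \neq a \in \mathbb{F}_p^k$, whose Euler classes identify with nonzero linear forms $\pm \sum_i a_i y_i$ in the polynomial subring $\mathbb{F}_p[y_1, \ldots, y_k] \subset H^*(BE; \mathbb{F}_p)$ (with $|y_i| = 2$); for $p = 2$, $V$ is a sum of the $n - 1$ nontrivial characters, each with first Stiefel--Whitney class $\sum_i a_i x_i \in H^1(BE; \mathbb{F}_2)$. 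In both cases $e(V)$ (respectively $w_{n-1}(V)$) becomes, up to sign, the product of all those nonzero linear forms, a Dickson-type polynomial invariant that is manifestly nonzero in the polynomial ring. Orientation subtleties are harmless: $V$ is $E$-orientable for $p$ odd since $\mathrm{Hom}(E, \Z/2) = 0$, and any orientation twist is absorbed by the $\mathbb{F}_2$ coefficients for $p = 2$.
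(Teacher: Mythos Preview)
Your proof is correct and follows essentially the same Euler-class/Borel-cohomology argument as the paper's appendix. The differences are cosmetic: you restrict to the elementary abelian $p$-subgroup $E$ from the outset and phrase the obstruction as ``a nowhere-vanishing section kills the Euler class,'' whereas the paper works with $G=\mathfrak S_n$ (or $A_n$ for odd $p$), restricts to $(\Z_p)^k$ only to check $e(V)\neq 0$, and packages the contradiction as a comparison of the transgressions $d_{n-1}$ in the Serre spectral sequences for $S(V)$ and for $X$.
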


Theorem \ref{thm:dold} along with its proof was communicated to us by Roman~Karasev. The theorem follows from \cite[Propositions 2.1, 2.5, 3.2]{volovikov2000index} but was probably known much earlier. For the reader's convenience we sketch the main steps of the proof communicated by Karasev at the end of the paper.

A natural question is if an analogue of Theorem \ref{thm:dold} holds for $n$ twice a prime power:
\begin{que}
\label{que:2pk}
Suppose that $n = 2p^k$ for some prime $p\neq 2$.

Is there $d$ such that for every $d$-connected Hausdorff compactum $X$ with a free action
of $\mathfrak S_n$ there is no equivariant map $X \to \R^n\setminus D_n$? If so, what is the minimal such $d$? 
\end{que}

\begin{rem}
For $X$ satisfying the conditions of Lemma \ref{lem:Vol} there is even a $\mathfrak S_n$-equivariant map $X*(\R^n\setminus D_n) \to \R^n\setminus D_n$ (follows from the proof of the lemma in \cite{avvakumov2019envy}). Note that the action on $X*(\R^n\setminus D_n)$ is not free. It is interesting to know when there exists a $\mathfrak S_n$-equivariant map $Y\to \R^n\setminus D_n$ for a general $Y$ with a non-free action of $\mathfrak S_n$. 
\end{rem}

\subsection*{Acknowledgments}
We thank Roman~Karasev and an anonymous referee for useful remarks and corrections to the text.

\section{Application to envy-free division}
We consider the following \emph{envy-free division problem} similar to the convex fair partition problem in \cite{akopyan2018convex, blagojevic2014convex, karasev2014convex}. Let $K\subset \R^d$ be a convex body. Consider $n$ players which want to divide $K$ among themselves into convex pieces of equal volume and equal subjective value: for each division of $K$ into convex pieces of equal volume each player has one or several pieces they like most. The preferences of the players are ``continuous'', i.e., the set of divisions of $K$ into convex pieces of equal volume where $i$th player likes the $j$th piece is closed. A division of $K$ into $n$ pieces \emph{solves} the problem if we can match players with pieces they like.

\begin{thm}
\label{thm:app}
Suppose that the number of players $n$ is not a prime power and not twice a prime power.

Then for any $d$ and convex $K\subset \R^d$ there is an instance of the envy-free division problem with no solution. Moreover, in this instance all players have the same preferences.
\end{thm}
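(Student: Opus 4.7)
The plan is to apply Theorem~\ref{thm:cor_main} to the configuration space of labeled convex equi-volume partitions of $K$, and then to turn the equivariant map produced by the theorem into a common preference function for all $n$ players whose non-solvability follows by a pigeonhole argument.

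Let $X$ be the space of ordered $n$-tuples $(P_1,\dots,P_n)$ of convex subsets of $K$ with pairwise disjoint interiors, union equal to $K$, and each of volume $\mathrm{vol}(K)/n$. Topologize $X$ coordinate-wise by the Hausdorff metric and let $\mathfrak{S}_n$ act by permuting labels. The action is free, because in any such partition the pieces are pairwise distinct as sets (they have disjoint interiors of positive volume). I will verify that $X$ is a Hausdorff compactum: Blaschke's selection theorem makes the space of compact convex subsets of $K$ compact under the Hausdorff metric; a standard estimate bounds the inradius of a convex body of given volume inside a bounded region below, so the Hausdorff limit of a sequence of pieces of volume $\mathrm{vol}(K)/n$ stays full-dimensional; volume is then continuous on the limit; and the conditions $\bigcup_j P_j = K$ and pairwise disjoint interiors pass to the limit (the latter by summing volumes and applying convexity). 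This compactness check is the most delicate analytic point in the argument.

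Granted these properties, Theorem~\ref{thm:cor_main} yields an $\mathfrak{S}_n$-equivariant continuous map $v : X \to \R^n \setminus D_n$. I then declare all $n$ players to have the same preferences: in a partition $x = (P_1,\dots,P_n) \in X$, each player values the piece $P_j$ at $v(x)_j$ and likes precisely the pieces attaining $\max_k v(x)_k$. The continuity requirement is immediate, since $\{x \in X : v(x)_j \geq v(x)_k \text{ for every } k\}$ is closed by continuity of $v$; equivariance of $v$ guarantees that the set of liked pieces is intrinsic to the underlying unlabeled partition and so does not depend on an arbitrary choice of labeling.

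For every $x \in X$ the condition $v(x) \notin D_n$ means that the index set $A(x) := \{j : v(x)_j = \max_k v(x)_k\}$ is a proper subset of $\{1,\dots,n\}$. Since all $n$ players like exactly the pieces with indices in $A(x)$, any attempted matching of players to pieces is a bijection whose image is all of $\{1,\dots,n\}$, which is not contained in $A(x)$; by the pigeonhole principle some player is assigned a piece they do not like. Hence the constructed instance admits no solution, completing the proof.
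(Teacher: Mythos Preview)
Your proof is correct and follows the same overall strategy as the paper: obtain an $\mathfrak S_n$-equivariant map from the space of labeled equal-volume convex partitions to $\R^n\setminus D_n$, declare every player to prefer the pieces maximizing the corresponding coordinate, and observe that no matching can exist since the argmax set is always proper.

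The one technical difference is in how Theorem~\ref{thm:cor_main} is invoked. You apply it directly to the partition space $X$, which forces you to verify that $X$ is a Hausdorff compactum via Blaschke selection and the auxiliary volume/inradius arguments. The paper instead sidesteps this by first mapping $X$ equivariantly to the ordered configuration space $F(n,d)$ via centers of mass, and then applying Theorem~\ref{thm:cor_main} to $F(n,d)$, which is known to retract $\mathfrak S_n$-equivariantly onto a compact polyhedron. The paper's route is a bit cleaner in that it outsources the compactness issue to a standard fact about configuration spaces; your route is more self-contained but requires the convex-geometric compactness check, which you handle correctly.
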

\begin{proof}
Consider the space $X$ of all divisions of $K$ into $n$ labeled convex pieces of equal volume. The symmetric group $\mathfrak S_n$ acts on $X$ by permuting the labels. 
Denote by $F(n, d)$ the space of ordered $n$-tuples of pairwise distinct points in $\R^d$. The group $\mathfrak S_n$ acts on $F(n, d)$ by permuting the points.
There is a $\mathfrak S_n$-equivariant map $X\to F(n, d)$ which maps each piece to its center of mass.

The space $F(n, d)$ retracts $\mathfrak S_n$-equivariantly to a compact polyhedron, see \cite{blagojevic2014convex}. So, by Theorem \ref{thm:cor_main}, there is a $\mathfrak S_n$-equivariant map $F(n, d)\to \R^n\setminus D_n$. Composing it with the map $X\to F(n, d)$ we get that there is a $\mathfrak S_n$-equivariant $f:X\to \R^n\setminus D_n$. (Note, that here Theorem \ref{thm:tvr})

Now assume that all players have the same preferences. In particular, that each player likes those pieces in a given division $x\in X$ which maximize the corresponding coordinate of $f(x)$. Player's preference does not change if we renumber the pieces in $x$ because $f$ is $\mathfrak S_n$-equivariant. A division $x\in X$ solves this envy-free division problem if and only if $f(x)\in D_n$, which is impossible.
\end{proof}

Note that the use of Theorem \ref{thm:cor_main} in the proof cannot be substituted by Theorem \ref{thm:tvr}. The reason is that Theorem \ref{thm:tvr} states only the existence of a map to $\R^{2\times n}\setminus D_{2,n}$ while we need a map to $\R^{1\times n}\setminus D_{1,n}=\R^n\setminus D_n$.

Combining the approach from \cite{gale1984equilibrium} with the results of \cite{karasev2014convex} or \cite{blagojevic2014convex}, we can prove that

\begin{thm}
If $d\geq 2$ and $n$ is a power of a prime, then the envy-free division problem always has a solution.
\end{thm}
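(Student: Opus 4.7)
The plan is to apply the configuration-space / test-map paradigm in the spirit of Gale \cite{gale1984equilibrium}, with Theorem \ref{thm:dold} in the role of the Borsuk--Ulam-type ingredient. For $n$ a prime power and $d \geq 2$, the works \cite{karasev2014convex, blagojevic2014convex} supply an $\mathfrak S_n$-equivariant, $(n-2)$-connected compact configuration space $Y$ that parametrizes a continuous family of labeled convex equipartitions of $K$ into $n$ pieces of equal volume, with the $\mathfrak S_n$-action corresponding to permuting the labels.

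Next I would construct a $\mathfrak S_n$-equivariant test map $f : Y \to \R^n$ out of the players' preferences. Fix a parameter $\lambda > 0$ and set
\[
m_{ij}(x) \;=\; \frac{\exp\bigl(\lambda\, u_i(P_j(x))\bigr)}{\sum_{k=1}^n \exp\bigl(\lambda\, u_i(P_k(x))\bigr)},
\]
where $u_i$ is player $i$'s continuous utility and $P_j(x)$ is the $j$-th piece of $x \in Y$. The rows of $m(x)$ always sum to $1$, while a relabeling of pieces permutes its columns; consequently the map $f_j(x) = \sum_i m_{ij}(x)$ is continuous, $\mathfrak S_n$-equivariant, and takes values in the affine hyperplane $\{\sum_j f_j = n\}$. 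Its image lies on the diagonal $D_n$ exactly when $m(x)$ is doubly stochastic, and Theorem \ref{thm:dold} applied to $Y$ guarantees the existence of some $x_\lambda \in Y$ with this property.

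Letting $\lambda \to \infty$ along a subsequence and using compactness of $Y$, the matrices $m(x_\lambda)$ converge to a doubly stochastic matrix $m_\infty$ supported on pairs $(i,j)$ with $j \in \arg\max_k u_i(P_k(x_\infty))$. By the Birkhoff--von Neumann theorem, $m_\infty$ is a convex combination of permutation matrices each of whose support lies inside this preference bipartite graph; any such permutation $\sigma$ gives an envy-free allocation, in which player $i$ receives piece $P_{\sigma(i)}(x_\infty)$. The main obstacle will be the first step: verifying that the configuration spaces of \cite{karasev2014convex, blagojevic2014convex} can actually be arranged to be $(n-2)$-connected with the appropriate $\mathfrak S_n$-action, so that Theorem \ref{thm:dold} applies directly; once this is in hand, the softmax construction and the Birkhoff--von Neumann limit argument are essentially routine.
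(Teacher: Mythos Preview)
Your overall framework matches the paper's, and the softmax/Birkhoff--von~Neumann limit is a legitimate way to execute the Gale step (modulo the minor mismatch that the problem statement supplies only closed preference correspondences, not continuous utilities $u_i$; one needs a partition-of-unity construction rather than softmax). The genuine gap is precisely the step you flag as the ``main obstacle'', and it cannot be overcome: the configuration spaces in \cite{karasev2014convex, blagojevic2014convex} are \emph{not} $(n-2)$-connected. The relevant space is $F(n,d)$, the ordered configuration space of $n$ points in $\R^d$, which is only $(d-2)$-connected; already for $d=2$ and $n\ge 3$ it fails to be simply connected (its fundamental group is the pure braid group). Thus Theorem~\ref{thm:dold} does not apply once $d<n$, and there is no evident way to replace $F(n,d)$ by a highly connected space that still parametrizes equal-volume convex partitions of $K$.

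The paper avoids this by not invoking Theorem~\ref{thm:dold} at all. After building the equivariant test map via Gale's approach, it quotes instead \cite[Theorem~1.10]{karasev2014convex} or \cite[Theorem~1.2]{blagojevic2014convex}, which state directly that for $d\ge 2$ and $n$ a prime power every $\mathfrak S_n$-equivariant map $F(n,d)\to\R^n$ meets the diagonal $D_n$. Those theorems are proved by Fadell--Husseini index\,/\,equivariant cohomology computations specific to $F(n,d)$, not by any connectivity hypothesis. So the black box you need does exist in the cited references, but it is not Theorem~\ref{thm:dold}; substituting the correct citation, your argument becomes essentially the paper's.
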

\begin{proof}
As in the proof of Theorem \ref{thm:app}, denote by $F(n, d)$ the space of $n$-tuples of pairwise distinct points in $\R^d$. To each element of $x\in F(n, d)$ corresponds a unique division of $K$ into convex pieces of equal volume. This division is the intersection of $K$ with a generalized Voronoi diagram with centers being the points in the $n$-tuple $x$, see \cite{karasev2014convex} or \cite{blagojevic2014convex}. So, we can identify $F(n, d)$ with some subset of divisions of $K$ into convex pieces of equal volume.

Using the approach of \cite{gale1984equilibrium} (also described in \cite{avvakumov2019envy}), we can ``convert'' player's preferences to a $\mathfrak S_n$-equivariant function $f:F(n, d)\to \R^n$ such that $x\in F(n, d)$ solves the problem if $f(x)\in D_n$. And for $d\geq 2$ and $n$ a prime power any $\mathfrak S_n$-equivariant map $f:F(n, d)\to \R^n$ hits the diagonal $D_n$, see \cite[Theorem 1.10]{karasev2014convex} or \cite[Theorem 1.2]{blagojevic2014convex}, which guarantees the existence of the solution.
\end{proof}

We do not know if our envy-free division problem has a solution for $n$ twice a prime power.

\section{Proof of Theorem \ref{thm:main}}

Denote by $\Sigma_n$ the boundary $\partial\Delta^{n-1}$. We agree that $\Sigma_1=\emptyset$ and ${\rm dim}\Sigma_1=-1$.

\begin{lem}
\label{lem:mod}
Let $G\subseteq \mathfrak S_n$ be a subgroup.
There exists a $G$-equivariant map $\Sigma_n\to\Sigma_n$ of degree $d$ if and only if
there exist points $x_1,\dots,x_k\in \Sigma_n$, subgroups $G_1,\dots,G_k\subseteq G$, and integers $d_1,\dots,d_k$, such that
\[
d = 1 - \sum_{i=1}^k d_i\frac{|G|}{|G_i|},
\]
and for each $i=1,\dots,k$
\begin{itemize}
\item[(1)] the subgroup $G_i\subseteq G$ is the stabilizer of $x_i\in \Sigma_n$,
\item[(2)] the $G$-orbits of all $x_i$ are pairwise disjoint,
\item[(3)] there is a $G_i$-equivariant map $\Sigma_n\to\Sigma_n$ of degree $d_i$ which is an identity in a neighborhood of $x_i$.
\end{itemize}
\end{lem}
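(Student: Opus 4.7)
The plan is to prove both implications by explicit $G$-equivariant cut-and-paste constructions.

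For the ``if'' direction, I would first choose pairwise disjoint $G$-invariant open neighborhoods $U_i$ of the orbits $G\cdot x_i$, of the form $U_i=\bigsqcup_{gG_i\in G/G_i} gB_i$, with $B_i$ a $G_i$-invariant open ball around $x_i$ sitting inside the neighborhood on which $\varphi_i$ is the identity. The key subsidiary step is to use $\varphi_i$ to produce a $G_i$-equivariant self-map $\psi_i\colon\Sigma_n\to\Sigma_n$ that equals the identity outside $B_i$ and has degree $1-d_i$; morally this ``swaps'' the roles of the identity region of $\varphi_i$ and its complement, so that the nontrivial content of $\varphi_i$ is transplanted inside $B_i$. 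Given $\psi_i$, I would define $f$ to be the identity on $\Sigma_n\setminus\bigsqcup_i U_i$ and $g\circ\psi_i\circ g^{-1}$ on each $gB_i$, which is $G$-equivariant by construction. A signed preimage count at a generic $y\in\Sigma_n\setminus\bigsqcup_i U_i$ then yields
\[
\deg f = 1+\sum_i \tfrac{|G|}{|G_i|}\bigl(\deg\psi_i-1\bigr) = 1-\sum_i d_i\tfrac{|G|}{|G_i|}=d;
\]
conjugation by $g$ preserves signed local counts regardless of the orientation of $g$, so orientation-reversing group elements cause no trouble.

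For the ``only if'' direction, I would start with a $G$-equivariant $f$ of degree $d$ and apply equivariant general position to $G$-equivariantly homotope it to a map $\tilde f$ that agrees with the identity outside a small $G$-invariant tubular neighborhood of a finite $G$-invariant subset $F\subset\Sigma_n$. Picking orbit representatives $x_1,\dots,x_k$ of $F$ with stabilizers $G_i$, the restriction of $\tilde f$ to a $G_i$-ball $B_i$ around each $x_i$, extended by identity, is a $G_i$-equivariant self-map $\psi_i$ of $\Sigma_n$ identity outside $B_i$; applying the reverse transplant then produces $\varphi_i$ identity near $x_i$ of degree $d_i:=1-\deg\psi_i$, with the degree formula following from the same preimage count as in the first direction.

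The hard part will be the $G_i$-equivariant transplant itself, i.e.\ establishing the bijection $d_i\leftrightarrow 1-d_i$ between achievable degrees of ``identity near $x_i$'' and ``identity outside a $G_i$-ball around $x_i$'' $G_i$-equivariant self-maps of $\Sigma_n$. The two $G_i$-invariant disks involved --- the small ball $\bar B_i$ containing the fixed point $x_i$ and the complementary disk $\overline{\Sigma_n\setminus B_i}$ --- generally carry inequivalent $G_i$-actions (their interior $G_i$-fixed-point sets differ), so the transplant cannot be realised as a $G_i$-equivariant homeomorphism between the two disks and must instead be built by a more careful $G_i$-equivariant argument tracking the relevant relative equivariant homotopy class.
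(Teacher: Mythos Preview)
Your overall architecture matches the paper's, but you have misidentified the hard part. In the ``if'' direction the paper does precisely what you declare impossible: it exhibits an explicit $G_i$-equivariant homeomorphism $\phi_i\colon V_i\to\Sigma_n\setminus V_i$ that is the identity on $\partial V_i$. Identify $\Sigma_n$ with the unit sphere $S\subset W_n$, take $V_i$ to be a small spherical cap around $x_i$, pick the point $C$ on the ray from $0$ through $x_i$ for which the lines from $C$ to $\partial V_i$ are tangent to $S$, and let $\phi_i$ be the radial projection from $C$. Since $G_i$ acts linearly on $W_n$ and fixes $x_i$, it fixes the whole axis and hence $C$, so $\phi_i$ is $G_i$-equivariant. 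Your worry about inequivalent fixed-point structures is misplaced: $S^{G_i}=W_n^{G_i}\cap S$ is a linear subsphere through $x_i$ and $-x_i$, and $\phi_i$ carries its small cap inside $V_i$ to the complementary cap. With $\phi_i$ in hand, one sets $f=g_i\circ\sigma\circ\phi_i\circ\sigma^{-1}$ on each $\sigma V_i$ and identity elsewhere, and the degree count is immediate; no separate ``transplant'' lemma is needed.

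For the ``only if'' direction your sketch has a genuine gap, and the paper proceeds differently. You assert that $f$ can be $G$-equivariantly homotoped to a map that \emph{equals} the identity outside a neighborhood of a finite $G$-set; this is not a standard equivariant general-position statement, and you give no argument for it. The paper avoids this entirely: it composes $f$ with the inclusion $\Sigma_n\hookrightarrow W_n$, connects the result to the standard inclusion by a $G$-equivariant homotopy $h\colon\Sigma_n\times[0,1]\to W_n$, and arranges (via a separate equivariant PL extension lemma, using that $\dim(\Sigma_n\times[0,1])^H\le\dim W_n^H$ for every $H\subseteq G$) that $h^{-1}(0)$ is finite. Then $\deg f-1$ equals the signed count of $h^{-1}(0)$; grouping the zeros by their $\Sigma_n$-coordinate $x_i$ and restricting $h$ to $\partial(U_i\times[0,1])$ yields, after an explicit $G_i$-equivariant identification $\partial(U_i\times[0,1])\cong\Sigma_n$ that is the identity on $U_i\times\{0\}$, the required $G_i$-equivariant map of degree $d_i$ which is already the identity near $x_i$. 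No reverse transplant is needed.
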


To prove the ``only if'' part of Lemma \ref{lem:mod} we need the following technical lemma. Informally, this lemma says that given a generic equivariant map on a subcomplex we can extend it, under some dimensional conditions, to the whole complex generically and equivariantly. See also the absolute version of the lemma (the case $Q=\emptyset$) in \cite[Lemma 3.4]{avvakumov2019envy} \footnote{Lemma 3.4 in the published version of \cite{avvakumov2019envy} has an incorrect proof and is probably false. The mistake is corrected in the arXiv versions 7 and later.}.

\begin{lem}
\label{lem:finite-to-one}
Assume that $G$ is a finite group acting on a simplicial complex $P$ and acting linearly on a vector space $V$. Let $Q\subset P$ be a $G$-equivariant subcomplex. Assume that for every subgroup $H\subseteq G$ the inequality $\dim P^H \le\dim V^H$ holds for the subspaces of $H$-fixed points in $P$ and $V$, respectively.

Let $y\in V$ be a point and let $f:Q\to V$ be a $G$-equivariant continuous map linear on every simplex of $Q$ and such that $f^{-1}(y)$ is finite. Then there exists a $G$-equivariant PL map $F:P\to V$ such that $F^{-1}(y)$ is finite and $F|_Q=f$.
\end{lem}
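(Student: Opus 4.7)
The plan is the standard equivariant skeleton-by-skeleton extension, with a general position argument at each step controlled by the dimension hypothesis.

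First, I would replace $P$ by its $G$-equivariant barycentric subdivision (and $Q$, $f$ correspondingly), so that for every simplex $\sigma$ of the new triangulation the setwise stabilizer $H := G_\sigma$ fixes $\sigma$ pointwise. This is the classical property of the barycentric subdivision. Consequently $\sigma \subseteq P^H$, and any $G$-equivariant map is forced to send $\sigma$ into $V^H$.

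Next, I would enumerate the $G$-orbits of simplices in $P\setminus Q$ by non-decreasing dimension and extend $F$ one orbit at a time. Consider an orbit with representative $\sigma$ of dimension $k$ and stabilizer $H$, assuming inductively that $F$ is already defined, PL and $G$-equivariantly, on $Q$ together with all previously processed orbits (so in particular on $\partial \sigma$). The hypothesis of the lemma gives $k \le \dim P^H \le \dim V^H$. Pick an interior point $c\in\sigma$, subdivide $\sigma$ as the cone from $c$ over the already-triangulated $\partial\sigma$, and put $F(c) := v$ for a point $v\in V^H$ to be chosen; extend $F$ affinely on each resulting cone-simplex $\tau = c * \eta$. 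The map $F|_\tau$ is affinely injective (in particular $F|_\tau^{-1}(y)$ has at most one point) whenever $v$ does not lie in the affine hull of the $F$-images of the vertices of $\eta$. Only finitely many such affine hulls arise, each of dimension at most $k-1 < \dim V^H$, so a generic $v\in V^H$ works for all of them simultaneously. Finally, extend $F$ to the whole orbit $G\cdot\sigma$ by $F(gx):=gF(x)$, which is well-defined because $H$ acts trivially on $\sigma$ and fixes $v$.

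The resulting $F$ is $G$-equivariant and PL, agrees with $f$ on $Q$, and adds at most finitely many points to $F^{-1}(y)$ at each inductive stage; together with the finiteness of $f^{-1}(y)$ this gives $F^{-1}(y)$ finite, provided $P$ has finitely many $G$-orbits of simplices, which is the setting relevant for the application to Lemma \ref{lem:mod}. The main technical care I expect to need is in the bookkeeping of the equivariant subdivisions: ensuring that coning the representative $\sigma$ from $c$ and then propagating by the $G$-action genuinely produces a simplicial subdivision of the orbit $G\cdot\sigma$ compatible with the triangulations inherited on $\partial\sigma$ and with the choices already made for smaller orbits. Once this setup is in place, the substance of the argument is the genericity step, which succeeds precisely because of the hypothesis $\dim P^H \le \dim V^H$.
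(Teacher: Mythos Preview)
Your overall strategy matches the paper's: pass to the barycentric subdivision so that setwise and pointwise stabilizers of simplices coincide, then extend over one $G$-orbit of simplices at a time by coning from the barycenter, with the value at the barycenter chosen generically inside $V^H$.

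There is, however, a genuine gap in your genericity step. You assert that $F|_{c*\eta}$ is affinely injective as soon as $v=F(c)$ avoids the affine hull of the $F$-images of the vertices of $\eta$. This is false when $F|_\eta$ is already degenerate, and you have no inductive control ruling that out (the given $f$ on $Q$ is only assumed to have $f^{-1}(y)$ finite, not to be simplexwise injective). For a concrete failure: take $\eta=[a,b]$ with $F(a)=F(b)=p$ and $y\neq p$; your condition only asks $v\neq p$, but if $y$ happens to lie on the segment $[p,v]$ then $F|_{c*\eta}^{-1}(y)$ is an entire segment.

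The paper repairs exactly this point with a two-case analysis for each boundary face $\tau$ of $\sigma$: if $\dim F(\tau)=\dim V^H-1$ (forcing $\dim F(\tau)=\dim\tau$, so $F|_\tau$ is already injective), it suffices to pick $F(c)$ outside the affine hull of $F(\tau)$, and then $F|_{\tau*c}$ is genuinely injective; if $\dim F(\tau)<\dim V^H-1$, one instead picks $F(c)$ outside the affine hull of $F(\tau)\cup\{y\}$, which is still a proper affine subspace of $V^H$, and then the image of $(\tau*c)\setminus\tau$ misses $y$ altogether. Your argument becomes correct once you strengthen the avoidance condition in this way; the dimension bound $k\le\dim V^H$ is precisely what makes the case split work.
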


\begin{proof}
Consider the barycentric subdivision of $P$. The vertices of the barycentric subdivision are marked by the dimension of the faces they originate from and those marks are preserved by $G$. Hence the action of $G$ has the following property:

\begin{itemize}
\item [(*)] For any $g\in G$ and any face $\sigma$ of the barycentric subdivision we have $g\sigma = \sigma$ if and only if $g$ is the identity map on $\sigma$.
\end{itemize}

We now assume that the triangulation of $P$ has this property. 

Let $P'$ be the barycentric subdivision of $P$ (that would be the second subdivision if the original triangulation didn't satisfy (*)).
We shall define $F$ on the vertices of $P'$ and extend $F$ to the simplices of $P'$ by linearity.

For every vertex of $P'$ which lies in $Q$ define $F$ to be equal to $f$.

For a vertex of $P'$ its \emph{dimension} is the dimension of the corresponding simplex of $P$. We proceed to define $F$ on the $G$-orbits of the remaining vertices in order of non-decreasing dimension proving that after every step  $(F|_{\sigma'})^{-1}(y)$ is finite for every simplex $\sigma'$ of $P'$ on which $F$ is already defined.

It remains to describe the individual step. Let $x$ be one of the lowest dimensional vertices of $P'$ on which $F$ is not defined yet. Let $\sigma\subset P$ be the simplex of $P$ of which $x$ is the center. Map $F$ is already defined on $\partial\sigma$ and by our construction we have that $(F|_{\partial\sigma})^{-1}(y)$ is finite. Let $H\subseteq G$ be the stabilizer of $x$. The value $F(x)$ must be chosen in $V^H$ and $F(x)\in V^H$ is the only constraint needed to extend $F$ to the orbit $Gx$ equivariantly.

By (*) we have that every point of $\sigma$ is fixed by $H$, hence $\sigma\subset P^H$. By the assumption of the lemma, $\dim\sigma \le \dim V^{H}$.

For every simplex $\tau \subset \partial\sigma\subset P'$ 

\begin{itemize}
\item [(i)] if $\dim F(\tau) = \dim V^H-1$, and hence $\dim F(\tau)=\dim \tau$, choose $F(x)$ outside of the affine hull of $F(\tau)$;
\item [(ii)] if $\dim F(\tau) < \dim V^H-1$ choose $F(x)$ outside of the affine hull of $F(\tau)\cup y$.
\end{itemize}

In both (i) and (ii) the ``forbidden'' affine hulls have dimension at most $\dim V^H-1$ and hence there is a choice of $F(x)\in V^H$ which satisfies these rules for all $\tau \subset \partial\sigma$.

In case (i) we have that the vertices of $\tau*x$ are mapped to affinely independent points and so $F(\tau*x)$ covers $y$ at most once.
In case (ii) we have that $|(F|_{\tau*x})^{-1}(y)|=|(F|_{\tau})^{-1}(y)|$, i.e., the $F$-image of $(\tau*x)\setminus \tau$ does not intersect $y$.
To summarize, we get that $(F|_{\sigma})^{-1}(y)$ is finite.
\end{proof}

Denote by $W_n$ the affine span of $\Delta^{n-1}$. We consider $W_n$ as a linear space with $0$ at the center of $\Delta^{n-1}$. Sometimes we identify $\Sigma_n\subset W_n$ with the unit sphere in $W_n$ by a $\mathfrak S_n$-equivariant homeomorphism.

\begin{proof}[\it Proof of the ``only if'' part of Lemma \ref{lem:mod}]
Consider any $G$-equivariant map $\Sigma_n \to \Sigma_n$ and compose it with the inclusion $\Sigma_n\subset W_n$ to obtain a $G$-equivariant map
\[
f_1 : \Sigma_n \to W_n.
\]
Let $f_0 : \Sigma_n\to W_n$ be the standard $\mathfrak S_n$-equivariant inclusion. 
By Lemma \ref{lem:finite-to-one}, we can connect $f_0$ and $f_1$ by a $G$-equivariant homotopy 
\[
h : \Sigma_n\times [0,1] \to W_n,
\]
such that $h^{-1}(0)$ is finite. (Let us check that Lemma \ref{lem:finite-to-one} can be applied. In the notation of the lemma we have $V=W_n$, $P=\Sigma_n\times [0,1]$, $Q=\Sigma_n\times \{0,1\}$, $y=0$, $f=f_0\sqcup f_1$, $F=h$. The space $\Sigma_n\times [0,1]$ can be identified with the closure of $2\Delta^{n-1}\setminus\Delta^{n-1}\subset W_n$, where $2\Delta^{n-1}$ is the unit simplex scaled by the factor of $2$. Under this identification we have $(\Sigma_n\times [0,1])^H\subset W_n^H$ for every subgroup $H\subseteq G$, and hence $\dim(\Sigma_n\times [0,1])^H\leq \dim W_n^H$. Also, $(f_0\sqcup f_1)^{-1}(0)=\emptyset$ and so is finite. So, the hypothesis of the lemma is satisfied.) 

Note that the difference in the degrees of $f_0$ and $f_1$ as maps of $\Sigma_n$ to itself equals the degree of $h$ over the center $0\in \Delta^{n-1}$. This follows from the fact that the degree of a map between closed connected oriented manifolds with boundary $h : M\to N$ satisfying $h(\partial M)\subset \partial N$ is well defined and equals the degree of the restriction $h|_{\partial M} : \partial M\to \partial N$ if $\partial N$ is connected. Here $M=\partial \Delta^{n-1}\times [0,1]$ and $N=\Delta^{n-1}$.

A \emph{local degree} of $h$ at a point of $h^{-1}(0)$ is the degree of the restriction of $h$ to a small neighborhood of the point containing no other points of $h^{-1}(0)$.
Note, that local degree can take integer values different from $\pm 1$ if $0$ is not a regular value of the restriction.
Because $h^{-1}(0)$ is finite, the degree of $h$ can be counted geometrically as the sum of local degrees at the points of $h^{-1}(0)$.

Let $(x_i, t_1)\in h^{-1}(0)$ be a point and let $(x_i, t_2), \dots, (x_i, t_\ell)\in h^{-1}(0)$ be all the other points of $h^{-1}(0)$ whose first coordinate is $x_i$. For any $j$ and $\sigma\in G\subseteq\mathfrak S_n$ the degree at the point $(\sigma x_i,t_j)$ is the same as at $(x_i,t_j)$, because $\sigma$ acts on the orientation of both the domain and the range by the permutation sign. So, the total degree corresponding to $x_i$ is $-d_i\frac{|G|}{|G_i|}$, where $-d_i$ is the sum over $j$ of the degrees at $(x_i,t_j)$ and $\frac{|G|}{|G_i|}$ is the size of the orbit of $x_i$. It remains to prove that $d_i$ satisfies (3).

Let $U\subset \Sigma_n$ be a neighborhood of $x_i$ such that $G_iU=U$. We take $U$ sufficiently small so that $U\times[0,1]$ contains no points of $h^{-1}(0)$ except for $(x_i, t_1), \dots, (x_i, t_\ell)$.
Clearly, $d_i$ equals the degree of the map 
\[
\phi:\partial(U\times[0,1]) \xrightarrow{h} W_n\setminus 0\xrightarrow{{\rm pr}} \Sigma_n,
\]
where ${\rm pr}:W_n\setminus 0\to \Sigma_n$ is the standard radial projection. The map $\phi$ is $G_i$-equivariant as a composition of two $G_i$-equivariant maps. The restriction of $\phi$ to $U\times \{0\}$ is the identity. 

There exists a $G_i$-equivariant homeomorphism $\psi:\partial(U\times[0,1])\to \Sigma_n$ which is the identity on $U\times \{0\}$. For example, one can construct $\psi$ as follows. Let $\psi':\partial(U\times[0,1])\to W_n$ be the map which is the identity on $U\times \{0\}$, maps every $y\times \{1\}\in U \times \{1\}$ to $y-2x_i$ (here we consider $y$ and $x_i$ as vectors in $W_n$), and is linear in $t\in[0,1]$ on $\partial U\times [0,1]$. Let $\psi$ be the composition of $\psi'$ with the projection ${\rm pr}:W_n\setminus 0\to \Sigma_n$.

So, $\phi\circ \psi^{-1}:\Sigma_n\to\Sigma_n$ is a $G_i$-equivariant map of degree $d_i$ and the identity on $U\ni x_i$, hence $d_i$ satisfies (3).
\end{proof}

\begin{proof}[\it Proof of the ``if'' part of Lemma \ref{lem:mod}]
For each $i=1,\dots,k$, let $g_i:\Sigma_n\to \Sigma_n$ be a $G_i$-equivariant map of degree $d_i$ which is the identity in a small neighborhood $U_i$ of $x_i$. 

There is a smaller neighborhood $x_i\in V_i\subset U_i$ such that $G_iV_i=V_i$ and there is $G_i$-equivariant homeomorphism $\phi_i:V_i\to\Sigma_n\setminus V_i$ which is the identity on $\partial V_i$. One can construct $V_i$ and $\phi_i$ as follows. Until the end of the paragraph identify $\Sigma_n$ with the unit sphere $S\subset W_n$ by a $\mathfrak S_n$-equivariant homeomorphism. Choose $V_i\subset U_i$ to be a small spherical neighborhood of $x_i$. Clearly, $G_iV_i=V_i$ (here we extend the action of $\mathfrak S_n\supset G$ to $W_n$ in the natural way). Denote by $C$ the point outside $S$ and lying on the line connecting $0$ with $x_i$ and such that any line connecting $C$ to any point in $\partial V$ is tangent to $S$. Define $\phi_i:V\to S\setminus V$ to be the radial projection with center $C$.

Because the $G$-orbits of all $x_i$ are pairwise disjoint and because we can choose $V_i$ to be arbitrary small, we may assume that
\begin{itemize}
\item the $G$-orbits of all $V_i$ are also pairwise disjoint,
\item $V_i\cap Gx_i=\{x_i\}$.
\end{itemize}

Define a map $f : \Sigma_n\to \Sigma_n$ as follows:
\begin{itemize}
\item $f$ equals to $g_i\circ \sigma \circ  \phi_i \circ \sigma^{-1}$ on $\sigma V_i$ for all $\sigma\in G$ and $1\leq i\leq k$,
\item $f$ is the identity map elsewhere.
\end{itemize}

It is easy to check that $f$ is continuous, well defined (independent of $\sigma$), and $G$-equivariant.

Clearly, ${\rm deg}f=1 - \sum_{i=1}^k {\rm deg}g_i\frac{|G|}{|G_i|}=1-\sum_{i=1}^k d_i\frac{|G|}{|G_i|}$.
\end{proof}

We are now ready to prove the ``only'' parts of Theorem \ref{thm:main}.
\begin{proof}[\it Proof of the ``only'' part of Theorem \ref{thm:main}(a) and Theorem \ref{thm:main}(b)]
Suppose that $n=p^k$ for some prime $p$. Consider any point of $\Sigma_n$ and split its barycentric coordinates into blocks of equal coordinates. Suppose the sizes of the blocks are $\alpha_1,\dots,\alpha_\ell$. Then the orbit of the point under $\mathfrak S_n$ has size 
\[
\frac{n!}{\alpha_1!\cdot\dots\cdot\alpha_\ell!} = \binom{n}{\alpha_1,\dots,\alpha_\ell}.
\]
The multinomial coefficient above is a product of binomial coefficients
\[
\binom{n}{\alpha_1,\dots,\alpha_\ell} = \binom{n}{\alpha_1}\cdot\binom{n-\alpha_1}{\alpha_2}\cdot,\dots,\cdot\binom{n-\alpha_1-\dots-\alpha_{\ell-1}}{\alpha_\ell}.
\]
Hence, it is divisible by $p$, as $\alpha_1<n$ and thus the first factor is divisible by $p$ by Lucas's theorem, \cite{lucas1878theorie}. So, the size of every orbit is divisible by $p$. Hence, by the ``only if'' part of Lemma \ref{lem:mod}, the degree of any $\mathfrak S_n$-equivariant map $\Sigma_n\to \Sigma_n$ is $1$ modulo $p$. This finishes the proof of the ``only'' part of Theorem \ref{thm:main}(a).

Suppose now that $n=2p^k$ for some prime $p\neq 2$. Then there is only one $\mathfrak S_n$ orbit whose size is not divisible by $p$. More precisely, it is the orbit of the center $x$ of the subsimplex of $\Sigma_n$ on the first $p^k$ vertices. Indeed, considering the product of binomial coefficients above and applying Lucas's theorem we see that the first factor is not divisible by $p$ only if $\alpha_1=p^k$ (note, that $\alpha_1=2p^k$ is impossible). Then the second factor is not divisible by $p$ only if $\alpha_2=\alpha_1=p^k$. The stabilizer of $x$ is $\mathfrak S_{p^k}\times\mathfrak S_{p^k}=:G$. The orbit of $x$ has size $\frac{|\mathfrak S_n|}{|G|}=\binom{2p^k}{p^k}$ which by Lucas's theorem equals $2$ modulo $p$.

So, by the ``only if'' part of Lemma \ref{lem:mod}, the degree of any $\mathfrak S_n$-equivariant map $\Sigma_n\to \Sigma_n$ is equal modulo $p$ either to $1$ (if none of the $G_i$ in the statement of the lemma are equal to $G$) or to $1-{\rm deg}(f)\frac{|\mathfrak S_n|}{|G|}\equiv 1-2\cdot{\rm deg}(f)\pmod{p}$, where $f:\Sigma_n\to \Sigma_n$ is some $G$-equivariant map which is the identity in a neighborhood of $x$.
It remains to prove that ${\rm deg}(f)$ is either $0$ or $1$ modulo $p$.

Let $x'$ be the center of the subsimplex of $\Sigma_n$ on the last $p^k$ vertices. Points $x$ and $x'$ are opposite to each other and are the only points of $\Sigma_n$ fixed by $G$. The size of the $G$-orbit of any other point $y\in\Sigma_n$ is divisible by $p$. Indeed, split the first $p^k$ barycentric coordinates of $y$ into blocks of equal coordinates. Suppose the sizes of the blocks are $\alpha_1,\dots,\alpha_\ell$. As before, the size of the $G$-orbit of $y$ is divisible by $\binom{p^k}{\alpha_1}$. By Lucas's theorem, the only way $\binom{p^k}{\alpha_1}$ is not divisible by $p$ is if $\alpha_1=p^k$, meaning that the first $p^k$ coordinates of $y$ are the same. Likewise, the last $p^k$ coordinates of $y$ are also the same. So, $y=x$ or $y=x'$.

Let $U$ be a small $G$-equivariant neighborhood of $x$ on which $f$ is the identity. 
Let us define a $G$-equivariant homotopy $h:\Sigma_n\times[0,1]\to W_n$ in the following way:

\begin{itemize}
\item $h|_{\Sigma_n\times \{0\}}=f$,
\item $h(\Sigma_n\times \{1\})=f(x')$,
\item $h(x'\times [0,1])=f(x')$,
\item $h|_{U\times[0,1]}$ is linear in $t\in [0,1]$, i.e., $h(y,t) = (1-t) h(y, 0) + t h(y, 1)$ for all $y\in U$.
\end{itemize}

By Lemma \ref{lem:finite-to-one}, we can $G$-equivariantly extend $h$ to the rest of $\Sigma_n\times[0,1]$ so that $h^{-1}(0)$ is finite and the degree can be counted geometrically as the sum of local degrees at the points of $h^{-1}(0)$ (see the definition of the local degree and a detailed explanation of why Lemma \ref{lem:finite-to-one} is applicable in the proof of the ``only if'' part of Lemma \ref{lem:mod} above).

The degree of the constant map $h|_{\Sigma_n\times \{1\}}$, considered as a map to $\Sigma_n$, is zero. So, the degree of $f$ is equal to the degree of $h$ over $0\in W_n$.

We know that $f(x)=x$ and from the $G$-equivariancy of $f:\Sigma_n\to \Sigma_n$, we have that $f(x')=x$ or $f(x')=x'$. 
Because $h$ is the identity on $U\times 0$ and linear in $t\in[0,1]$ on $U\times [0,1]$, we have that $(h|_{U\times[0,1]})^{-1}(0)$ is either empty, if $f(x')=f(x)=x$; or contains the single point of the form $(x,\frac{1}{2})$, if $f(x')=x'$. In the second case the local degree of $h$ at $(x,\frac{1}{2})$ is $1$.

Because $h(x', t)=f(x')\neq 0$ we get that $(x', t)$ is not in $h^{-1}(0)$ for every $t\in[0,1]$.

For any other $y\in \Sigma_n$, $y\neq x,x'$ the size of the $G$-orbit of $(y, t), t\in [0,1]$ is divisible by $p$.
So, the degree of $h$ over $0\in W_n$, and hence the degree of $f$, is either $1$ or $0$ modulo $p$, depending on whether $(x,\frac{1}{2})$ is in $h^{-1}(0)$ or not.  This finishes the proof of Theorem \ref{thm:main}(b).
\end{proof}

To prove the rest of Theorem \ref{thm:main} we investigate which degrees can be attained by maps $\Sigma_n\to\Sigma_n$ satisfying the condition (3) of Lemma \ref{lem:mod}.

\begin{lem}   
\label{lem:join}
For $1\leq k < n$ let $x\in \Sigma_n$ be any point whose stabilizer under the action of $\mathfrak S_n$ is the subgroup $G:=\mathfrak S_k\times \mathfrak S_{n-k}$.
Let $f_1,\dots,f_\ell:\Sigma_k*\Sigma_{n-k}\to\Sigma_k*\Sigma_{n-k}$ be $G$-equivariant maps with degrees $ d_1,\dots,  d_\ell$, respectively.

Then for any choice of the numbers $\varepsilon_i\in\{0,1\}$ there exists a $G$-equivariant map $\Sigma_n\to\Sigma_n$ which is the identity in a neighborhood of $x$ and whose degree is
\[
1 + \varepsilon_1( d_1-1) + \varepsilon_2( d_2- d_1) + \dots + \varepsilon_{\ell}( d_\ell- d_{\ell-1}) - \varepsilon_{\ell+1} d_\ell.
\]
\end{lem}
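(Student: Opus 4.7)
The plan is to exploit the $G$-equivariant suspension structure of $\Sigma_n$ at $x$ and build $F$ piecewise along that structure. First, I would observe that the hypothesis on the stabilizer forces the barycentric coordinates of $x$ to split into a block of $k$ equal entries and a block of $n-k$ equal entries; since $x\in\partial\Delta^{n-1}$ one block must vanish, so $x$ is the barycenter of one of the two complementary faces $\Delta^{k-1},\Delta^{n-k-1}\subset\Sigma_n$, and the opposite barycenter $x'$ is a second $G$-fixed point. The join identification $\Delta^{n-1}=\Delta^{k-1}*\Delta^{n-k-1}$ then yields a $G$-equivariant homeomorphism between $\Sigma_n$ and the unreduced suspension of the equator $E:=\Sigma_k*\Sigma_{n-k}$ with suspension points $x,x'$, together with a $G$-invariant ``latitude'' function $t:\Sigma_n\to[-1,1]$ with $t(x)=1$ and $t(x')=-1$.

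Next, I would choose latitudes $1>t_1>\cdots>t_{\ell+1}>-1$ and cut $\Sigma_n$ into $\ell+2$ $G$-invariant regions: a polar cap $R_0$ around $x$ (at latitudes $\ge t_1$), cylindrical annuli $R_i\cong E\times I$ at latitudes in $[t_{i+1},t_i]$ for $1\le i\le\ell$, and a polar cap $R_{\ell+1}$ around $x'$. I would declare $F$ to be the identity on $R_0$ (giving the required identity neighborhood of $x$) and on every $R_i$ with $\varepsilon_i=0$. On each region with $\varepsilon_i=1$, a $G$-equivariant ``surgery'' is inserted, of relative degree $d_i-d_{i-1}$ (with the convention $d_0:=1$) for $i=1,\ldots,\ell$, and of relative degree $-d_\ell$ for $i=\ell+1$.

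The surgery on an annulus $R_i$ ($1\le i\le\ell$) is the superposition of two disjoint ``latitude sweeps'': one modeled on $\Sigma f_i$ contributing $+d_i$, and one modeled on the reversed suspension of $\Sigma f_{i-1}$ contributing $-d_{i-1}$. Each sweep occupies a thin sub-annulus whose middle section stretches the latitude parameter across the full range $[-1,1]$ and acts by $\Sigma f_i$ (respectively $\Sigma f_{i-1}$) in the equator direction. The end collars of each sweep are straight-line $G$-equivariant contractions toward $x$ or $x'$ which bridge the cone-point boundary values of the sweep with the identity boundaries of the ambient region; $G$-equivariance of these collars is automatic because they contract along lines to $G$-fixed points. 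The surgery on $R_{\ell+1}$ is a single reversed $\Sigma f_\ell$-sweep centered at $x'$, which is genuinely $G$-fixed and lies in the interior of $R_{\ell+1}$. The degree is then computed by tallying signed preimages of a generic regular value region by region: the identity parts contribute $+1$ (the preimage $y_*$ itself), each active surgery contributes its pure wrap difference $d_i-d_{i-1}$ (respectively $-d_\ell$), and summing reproduces the formula.

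The main obstacle---and the essentially technical heart of the proof---is the orientation bookkeeping of the obligatory collars: each collar interpolating between an identity boundary and a cone-point constant is, as a map to $\Sigma_n$, orientation-reversing with respect to the induced suspension orientation, and one must verify that these $\pm 1$ contributions cancel exactly so as to leave the pure wrap degrees in the net count. Everything else---$G$-equivariance, continuity, and the existence of the suspension structure---follows formally from $x,x'$ being $G$-fixed, the $f_i$ being $G$-equivariant, and straight-line contractions toward $G$-fixed points being automatically $G$-equivariant.
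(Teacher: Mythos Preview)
Your approach is correct and genuinely different from the paper's. Both proofs begin by identifying $\Sigma_n$ with the suspension of $E=\Sigma_k*\Sigma_{n-k}$ (suspension points $x,x'$) and slicing it by latitude into a top cap, $\ell$ cylindrical bands, and a bottom cap. The divergence is in what is prescribed on the band boundaries. The paper puts $f_i$ on the far boundary of the $i$th band, so that adjacent bands share the value $f_i$ on their common equatorial copy; it then extends across each band by an equivariant null-homotopy into one of the two $G$-contractible cones (the one containing $x$ if $\varepsilon_i=1$, the one containing $x'$ if $\varepsilon_i=0$), and the local degree over $x$ is read off instantly as the difference $d_i-d_{i-1}$ of the boundary degrees, with no collar bookkeeping at all. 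You instead keep $F$ equal to the identity on every band boundary and localize the degree change into a pair of explicit suspension sweeps inside each active band. Concretely, a forward $f_i$-sweep on a sub-annulus contributes $d_i-1$ (middle $+d_i$, one collar $-1$, the other collar $0$) and a reversed $f_{i-1}$-sweep contributes $1-d_{i-1}$, so the collar $\pm1$'s cancel in pairs exactly as you assert and the band contributes $d_i-d_{i-1}$. Your route is more explicit but buys that at the cost of the orientation accounting; the paper's contractibility argument hides this entirely.

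One point you should make fully explicit. Your surgery on $R_{\ell+1}$ must use $x'$ itself as the lower cone-point of the sweep (so the ``middle'' is the small cap $\{t\le d'\}$, and there is only the single upper collar, which contributes $0$); then the reversed middle contributes exactly $-d_\ell$ as required. If instead you place the reversed $f_\ell$-sweep on a thin sub-annulus strictly away from $x'$ and leave $F$ equal to the identity on the small cap around $x'$---as your earlier phrase ``each sweep occupies a thin sub-annulus'' might suggest---then by the same collar count the contribution is $1-d_\ell$, and the total degree is off by one. Your phrase ``centered at $x'$'' indicates you intend the former, but the two descriptions are in tension and the distinction is exactly what makes the last term $-\varepsilon_{\ell+1}d_\ell$ rather than $\varepsilon_{\ell+1}(1-d_\ell)$.
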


\begin{cor}
\label{cor:join}
Using the notation from the statement of Lemma \ref{lem:join}, suppose there exists a $G$-equivariant map $\Sigma_k*\Sigma_{n-k}\to\Sigma_k*\Sigma_{n-k}$ of degree $-1$. Then for any $ d$ there exists a $G$-equivariant map $\Sigma_n\to\Sigma_n$ which is the identity in a neighborhood of $x$ and whose degree is $ d$.
\end{cor}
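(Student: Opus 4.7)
The plan is to apply Lemma \ref{lem:join} with a long alternating sequence $(f_1,\dots,f_\ell)$ of $G$-equivariant self-maps of $\Sigma_k*\Sigma_{n-k}$ whose degrees alternate between $-1$ and $+1$. The hypothesis furnishes a degree $-1$ map $g$, while the identity map $\mathrm{id}_{\Sigma_k*\Sigma_{n-k}}$ supplies the required $G$-equivariant map of degree $+1$ (the square $g\circ g$ would also do).

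Concretely, for an arbitrary target degree $d\in\Z$, I would choose $m$ large enough that $d\in[-2m,\,2m+1]$, set $\ell=2m$, and take
\[
(f_1,f_2,f_3,\dots,f_{2m-1},f_{2m})=(g,\,\mathrm{id},\,g,\,\mathrm{id},\,\dots,\,g,\,\mathrm{id}),
\]
so that the degrees are $(d_1,\dots,d_{2m})=(-1,+1,-1,+1,\dots,-1,+1)$. Introducing the conventions $d_0:=1$ and $d_{\ell+1}:=0$, the degree produced by Lemma \ref{lem:join} rewrites as
\[
1+\sum_{i=1}^{2m+1}\varepsilon_i\,(d_i-d_{i-1}),
\]
and the multiset of consecutive differences $\{d_i-d_{i-1}\}$ consists of $m$ copies of $-2$, $m$ copies of $+2$, and a single trailing $-1$ (coming from $d_{\ell+1}-d_\ell=0-1=-1$).

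The remaining argument is a short subset-sum count: choosing $a$ of the $-2$'s and $b$ of the $+2$'s (with $0\le a,b\le m$) realizes every even integer in $[-2m,2m]$, and toggling the terminal $-1$ sweeps out every odd integer in $[-2m-1,2m-1]$. Adding the leading $1$ converts these into all integers in $[-2m,\,2m+1]$, and letting $m$ grow covers every $d\in\Z$. I do not anticipate any substantive obstacle; the content of the argument lies entirely in Lemma \ref{lem:join}, and the corollary amounts to packaging that lemma together with the observations that compositions of $G$-equivariant maps are $G$-equivariant and that alternating $\pm 2$ jumps around $0$, together with an endpoint $-1$, realize every integer in a window that grows linearly with $\ell$.
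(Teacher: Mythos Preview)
Your proof is correct and follows essentially the same approach as the paper: both arguments invoke Lemma~\ref{lem:join} using only the identity map and the given degree~$-1$ map, so that the available $d_i$ are $\pm 1$, and then show that the resulting $\varepsilon$-weighted telescoping sum hits every integer. The only cosmetic difference is that the paper argues inductively (each extension of the sequence by one map shifts the degree by $(\varepsilon_{\ell+1}-\varepsilon_{\ell+2})d_{\ell+1}\in\{+1,-1\}$), whereas you fix a long alternating sequence up front and do a subset-sum count---but the underlying mechanism is identical.
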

\begin{proof}
The identity map $\Sigma_k*\Sigma_{n-k}\to\Sigma_k*\Sigma_{n-k}$ has degree $1$ and is $G$-equivariant. So, we can use $\pm 1$ for $ d_i$ in the statement of Lemma \ref{lem:join}.

Suppose we were able to achieve some degree using some values for $ d_1,\dots, d_\ell$ and $\varepsilon_1,\dots,\varepsilon_\ell$. It's sufficient to prove that we can change this number by $1$ and by $-1$ by incrementing $\ell$ and making a correct choice of $ d_{\ell+1}$ and $\varepsilon_{\ell+2}$.

When we increase $\ell$ by $1$ the degree changes by $w:=\varepsilon_{\ell+1} d_{\ell+1}-\varepsilon_{\ell+2} d_{\ell+1}=(\varepsilon_{\ell+1}-\varepsilon_{\ell+2}) d_{\ell+1}$. For any value of $\varepsilon_{\ell+1}\in{\{0,1\}}$, we can choose $\varepsilon_{\ell+2}$ so that $|\varepsilon_{\ell+1}-\varepsilon_{\ell+2}|=1$. Then choosing $ d_{\ell+1}$ to be either $1$ or $-1$, we can get $w=1$ and $w=-1$.
\end{proof}

\begin{cor}
\label{cor:join1}
Using the notation from the statement of Lemma \ref{lem:join}, suppose there exists a $G$-equivariant map $\Sigma_k*\Sigma_{n-k}\to\Sigma_k*\Sigma_{n-k}$ of degree $ d$. Then there exists a $G$-equivariant map $\Sigma_n\to\Sigma_n$ which is the identity in a neighborhood of $x$ and whose degree is also $ d$.
\end{cor}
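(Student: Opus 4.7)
The plan is a direct application of Lemma \ref{lem:join} in its simplest non-trivial form, namely with $\ell = 1$. Let $f_1$ denote the given $G$-equivariant map $\Sigma_k * \Sigma_{n-k} \to \Sigma_k * \Sigma_{n-k}$ of degree $d$, so in the notation of the lemma we set $d_1 = d$. Specializing the degree formula of Lemma \ref{lem:join} to $\ell = 1$ gives
\[
1 + \varepsilon_1(d_1 - 1) - \varepsilon_2 d_1 = 1 + \varepsilon_1(d - 1) - \varepsilon_2\, d.
\]
Taking $\varepsilon_1 = 1$ and $\varepsilon_2 = 0$ yields $1 + (d - 1) - 0 = d$, and Lemma \ref{lem:join} then supplies a $G$-equivariant map $\Sigma_n \to \Sigma_n$ of this degree which is the identity in a neighborhood of $x$, exactly as required.

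There is no substantive obstacle; the entire argument is a choice of parameters in the formula of Lemma \ref{lem:join}. For contrast, Corollary \ref{cor:join} needs the hypothesis of a degree $-1$ map in order to bridge between consecutive integers and thereby realize every integer as a degree, whereas here we only need to reproduce the single specified value $d$, so a single $f_1$ of degree $d$ suffices.
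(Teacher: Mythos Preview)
Your proof is correct and matches the paper's own argument exactly: the paper also applies Lemma~\ref{lem:join} with $\ell=1$, $d_1=d$, $\varepsilon_1=1$, $\varepsilon_2=0$.
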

\begin{proof}
In the statement of Lemma \ref{lem:join}, put $\ell=1$, $ d_1=d$, $\varepsilon_1 =1$, $\varepsilon_2 =0$. The corollary follows.
\end{proof}

\begin{proof}[Proof of Lemma \ref{lem:join}]
Using the radial projection we can $G$-equivariantly identify $\Sigma_n$ with the unit sphere $S$. Draw the diameter containing $x$. Draw $\ell+1$ different hyperplanes orthogonal to the diameter and intersecting its interior. The hyperplanes cut $S$ into two spherical caps $U_1$ and $U_2$ which are $G$-equivariantly homeomorphic to a cone over $\Sigma_k*\Sigma_{n-k}$, where $U_1$ contains $x$ and $U_2$ contains the point opposite to $x$; and $\ell$ cylinders $C_i$, each $G$-equivariantly homeomorphic to $\Sigma_k*\Sigma_{n-k}\times [0,1]$. We identify each $C_i$ with $\Sigma_k*\Sigma_{n-k}\times [0,1]$ using this  $G$-equivariant homeomorphism. For each $i$, the end $\Sigma_k*\Sigma_{n-k}\times \{1\}$ of $C_i$ is the end which is further away from $x$.

Now we construct a required map $f:\Sigma_n\to\Sigma_n$ defining it on the caps $U_1$, $U_2$, and on the cylinders $C_1,\dots,C_\ell$.
By construction, $f(\partial U_1)$, $f(\partial U_2)$, and $f(\partial C_1),\dots,f(\partial C_\ell)$ will be disjoint with $x$. So,
the degree of $f$ over $x$ will be the sum of the degrees of $f|_{U_1}$, $f|_{U_2}$, and $f|_{C_1},\dots,f|_{C_\ell}$ over $x$.

Define $f|_{U_1}$ to be the identity. Clearly, the degree of $f|_{U_1}$ over $x$ is $1$.

For each $i$, let $f$ map the end $\Sigma_k*\Sigma_{n-k}\times \{1\}$ of the cylinder $C_i$ to itself by the map $f_i$.
So, $f(\partial C_i)\subset \partial C_i$ for all $i$.

Let us now define $f$ on the interior of $C_i$.
Suppose that $\varepsilon_i = 1$. The restrictions of $f$ to the two boundary components of $C_i$ are $G$-equivariantly homotopic to each other as maps to $U_1\cup C_1\cup\dots\cup C_i$ because the latter space is $G$-equivariantly contractible. So, we can extend $f$ to the interior of $C_i$ using any such homotopy. The degree of $f|_{C_i}$ over $x$ is then the difference $d_i- d_{i-1}$ between the degrees of its restrictions to the boundary components considered as maps $\Sigma_k*\Sigma_{n-k}\to \Sigma_k*\Sigma_{n-k}$.

Suppose now that $\varepsilon_i = 0$. Again, the restrictions of $f$ to the two boundary components of $C_i$ are $G$-equivariantly homotopic to each other as maps to $C_{i}\cup\dots\cup C_{\ell}\cup U_2$. So, we can extend $f$ to the interior of $C_i$ using any such homotopy. The degree of $f|_{C_i}$ over $x$ is then $0$, since the map misses $x$.

In total, the degree of $f|_{C_i}$ over $x$ is $\varepsilon_i(d_i- d_{i-1})$.

The restriction $f|_{\partial U_2}$ is already defined to be $f_\ell$. Extend $f$ to $U_2$  by
some $G$-equivariant map going to either $U_2$ or its complement $S\setminus
U_2$ according to the value $\varepsilon_{\ell+1}=0$ or $\varepsilon_{\ell+1}=1$, respectively. The extension is possible since both $U_2$ and $S\setminus
U_2$ are $G$-equivariantly contractible. If $\varepsilon_{\ell+1}=0$, the degree of $f|_{U_2}$ over $x$ is $0$ since the map misses $x$. Otherwise, the degree equals $-d_\ell$, i.e., minus the degree of the restriction to the boundary considered as a map $\Sigma_k*\Sigma_{n-k}\to \Sigma_k*\Sigma_{n-k}$. In total, the degree of $f|_{U_2}$ over $x$ is $-\varepsilon_{\ell+1}d_\ell$.

So, the total degree of $f$ is as required.
\end{proof}

The last lemma we need is used only in the proof of part (d) of Theorem \ref{thm:main}.

\begin{lem}
\label{lem:representation_of_zero}
Let $n$ be a positive integer which is not a prime power and not twice a prime power. Then there exist integers $d_1, d_2, \dots, d_{n-1}$ such that 
\begin{itemize}
\item $1-\sum \limits_{k = 1}^{n-1}d_k\binom{n}{k}=0$,
\item $d_{q^\alpha}=0$ or $d_{q^\alpha} \equiv 1 \pmod{q}$ for any prime $q$ and $\alpha > 0$,
\item $d_1 \equiv 1 \pmod{p}$ if $n=p^t+1$ for some prime $p$.
\end{itemize}
\end{lem}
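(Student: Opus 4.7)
For each prime $p$, let $S_p := \{k \in \{1,\dots,n-1\} : p \nmid \binom{n}{k}\}$; by Lucas's theorem, $S_p$ is the set of $k$ whose base-$p$ digits are all at most the corresponding digits of $n$, and condition (i) reduces modulo $p$ to $\sum_{k \in S_p} d_k \binom{n}{k} \equiv 1 \pmod p$. My plan is to choose the $d_k$'s prime-by-prime to satisfy this congruence together with (ii) and (iii), combine the choices into integers via the Chinese Remainder Theorem, and then absorb the residual $R := 1 - \sum d_k \binom{n}{k}$ by adjusting the $d_k$'s at non-prime-power indices.

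The structural observation driving the construction is that for each prime $p$, the only indices $k \in S_p$ whose $d_k$ is constrained \emph{modulo $p$} are the strict powers of $p$ (via (ii) with $q=p$) and, when $n = p^t+1$, the index $k=1$ (via (iii)). For any $k = q^\alpha \in S_p$ with $q \neq p$, the constraint $d_k \in \{0\}\cup(1+q\mathbb Z)$ imposes nothing on $d_k \bmod p$: once we commit to $d_k \neq 0$, shifting $d_k$ by multiples of $q$ (invertible mod $p$) lets $d_k \bmod p$ take every value. Similarly, $d_k$ for a non-prime-power $k$ is totally free. Hence whenever $S_p$ contains either a non-prime-power or a power of a prime different from $p$, the mod-$p$ congruence is freely satisfiable.

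The only hard case is when every $k \in S_p$ is a power of $p$ (allowing $1 = p^0$), equivalently when every $k \in S_p$ has a single non-zero base-$p$ digit equal to $1$. A direct analysis of the base-$p$ representation of $n$ shows that this occurs precisely when $n = p^{i_1}+p^{i_2}$ for some $0 \leq i_1 < i_2$. Indeed: a one-digit $n = c p^s$ with $c \in \{1,2\}$ makes $n$ a prime power or twice a prime power (both excluded by hypothesis), and with $c \geq 3$ the element $2 p^s \in S_p$ is not a power of $p$; similarly an $n$ with a base-$p$ digit of value $\geq 2$ yields $2 p^{i_a} \in S_p$, again not a power of $p$; and an $n$ with three or more non-zero base-$p$ digits produces $p^{i_a}+p^{i_b} \in S_p$, which is not a power of $p$ since $p^{i_a}+p^{i_b} = p^{\min(i_a,i_b)}(1+p^{|i_a-i_b|})$ has second factor coprime to $p$ and $>1$. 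In the remaining case $n = p^{i_1}+p^{i_2}$ we have $S_p = \{p^{i_1},p^{i_2}\}$ and Lucas gives $\binom{n}{p^{i_1}} \equiv \binom{n}{p^{i_2}} \equiv 1 \pmod p$; setting $d_{p^{i_1}}=1$, $d_{p^{i_2}}=0$ satisfies (ii) and the congruence, and when $i_1=0$ this is the $n=p^t+1$ scenario and $d_1=1$ matches (iii) exactly.

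Combining the prime-local choices by CRT yields an integer vector $(d_k)$ satisfying (ii), (iii), and $\sum d_k \binom{n}{k} \equiv 1$ modulo every relevant prime. The adjustment lattice generated by $\{q\binom{n}{k} : k=q^\alpha,\, d_k\neq 0\}\cup \{\binom{n}{k} : k$ non-prime-power$\}$ has $p$-adic valuation exactly $1$ at each hard prime (matching $v_p(R) \geq 1$ by construction) and contains a generator coprime to $p$ at each easy prime; hence $R$ lies in the lattice and can be cancelled by changing the non-prime-power $d_k$'s, yielding the exact equality. The main obstacle is the combinatorial case analysis in the third paragraph, where the hypothesis ``$n$ is not a prime power and not twice a prime power'' is precisely what rules out the pathological digit patterns.
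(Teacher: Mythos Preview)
Your approach is essentially the same as the paper's: both identify the ``hard'' primes $p$ for which $n=p^{i_1}+p^{i_2}$, commit $d_{p^{i_2}}=0$ and $d_{p^{i_1}}\equiv 1\pmod p$, parametrize the remaining freedom, and reduce the problem to showing that the gcd of the adjustment generators divides the residual. Your digit-pattern case analysis in the third paragraph is clean and matches the paper's identification of the hard primes.

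The gap is in your final paragraph. You assert that the adjustment lattice ``contains a generator coprime to $p$ at each easy prime'' but do not prove it, and this is actually the most delicate step. Your case analysis shows that for an easy prime $p$ the set $S_p$ contains some $k$ which is not a power of $p$; but to obtain a lattice generator coprime to $p$ you must also rule out that every such $k$ is one of the indices you have frozen to $d_k=0$, namely the $p_j^{t_j}$ from the hard decompositions. The paper handles this by an explicit construction: for an easy prime $q$ it takes $k$ to be $n$ with its leading base-$q$ digit decreased by one, and then checks (using the hypothesis ``not twice a prime power'' a second time) that this $k$ is neither a positive power of $q$ nor any $p_j^{t_j}$, so that $c_k$ is coprime to $q$. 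Your witnesses $2p^{i_a}$ and $p^{i_a}+p^{i_b}$ from the third paragraph can in fact be made to serve the same purpose, but you need to verify explicitly that they avoid the frozen indices. A minor related slip: your description of the adjustment lattice omits the constraint on $d_1$ in the case $n=p^t+1$; there the generator attached to $k=1$ should be $p\binom{n}{1}$ rather than $\binom{n}{1}$, otherwise your claim that the lattice has $p$-adic valuation exactly $1$ at the hard prime $p$ is false.
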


\begin{proof}
Consider all distinct representations of $n$ as a sum of two powers of the same prime, $n=p_1^{s_1}+p_1^{t_1}=p_2^{s_2}+p_2^{t_2}=\dots =p_\ell^{s_\ell}+p_\ell^{t_\ell}$, $0\leq s_i<t_i$ for each $i=1,\dots, \ell$. Note that it is possible that $s_i=0$ for some $i$. Clearly, $p_i\ne p_j$ for all $1\le i<j\le \ell$.

Set 

\begin{itemize}
\item $d_k = 0$ if $k = p_i^{t_i}$ for some $i$,
\item $d_k=1 + p_ib_k$ if $k = p_i^{s_i}$ for some $i$,
\item $d_k = 1+qb_k$ if $k\neq p_i^{s_i}$ and $k\neq p_i^{t_i}$ for all $i$ and $k=q^{\alpha}$, $\alpha>0$ for some prime $q$,
\item $d_k=b_k$ otherwise,
\end{itemize}
where integers $b_k$ will be chosen later. It is easy to see that the last two conditions on $d_k$ in the statement of the lemma are satisfied by this assignment.

Define the number $$N = 1 - \sum \limits_{k=p_i^{s_i}}\binom{n}{k} - \sum \limits_{k\neq p_i^{s_i},\;k\neq p_i^{t_i},\;k=q^{\alpha},\;\alpha>0} \binom{n}{k}.$$ Here the summation is over $k=1,\dots,n-1$ satisfying the second or the third case above. 
Define numbers $c_k$ as follows:
\begin{itemize}
\item $c_k = 0$ if $k = p_i^{t_i}$ for some $i$,
\item $c_k = p_i\binom{n}{k}$ if $k = p_i^{s_i}$ for some $i$,
\item $c_k = q\binom{n}{k}$ if $k\neq p_i^{s_i}$ and $k\neq p_i^{t_i}$ for all $i$ and $k=q^{\alpha}$, $\alpha>0$ for some prime $q$,
\item $c_k = \binom{n}{k}$ otherwise.
\end{itemize}

Plugging in these definitions we get
\[
1-\sum \limits_{k = 1}^{n-1}d_k\binom{n}{k}= N-\sum \limits_{k=1}^{n-1}b_kc_k.
\]
It remains to prove that we can choose $b_k$ so that the right-hand expression becomes $0$. This will follow if we prove that ${\rm GCD}(c_1,\dots,c_{n-1})$ divides $N$. To do that we first prove that $N$ is divisible by $p_1p_2\dots p_\ell$ and then prove that ${\rm GCD}(c_1,\dots,c_{n-1})$ divides $p_1p_2\dots p_\ell$.

By Lucas's theorem, for every $p_i$ and every $1\leq k\leq n-1$ the binomial coefficient $\binom{n}{k}$ is divisible by $p_i$, unless $k=p_i^{s_i}$ or $k=p_i^{t_i}$, in which case $\binom{n}{k}$ is equal $1$ modulo $p_i$. In the definition of $N$ above, for each $i$ there is a single summand $\binom{n}{p_i^{s_i}}$ and no summands $\binom{n}{p_i^{t_i}}$. Hence, $N$ is divisible by $p_i$ for every $i$.

Let us prove that ${\rm GCD}(c_1,\dots,c_{n-1})$ divides $p_1p_2\dots p_\ell$. Fix $i$. If $k=p^{s_i}$, then $c_k = p_i\binom{n}{p^{s_i}}$ is not divisible by $p_i^2$ because $\binom{n}{p^{s_i}}$ is not divisible by $p_i$. Hence, ${\rm GCD}(c_1,\dots,c_{n-1})$ is not divisible by $p_i^2$ for every $i$.

It remains to prove that ${\rm GCD}(c_1,\dots,c_{n-1})$ is not divisible by any prime $q$ which is not equal to any of $p_i$. To do so we find $c_k$ which is not divisible by $q$.

Suppose that $q>n$. Then $\binom{n}{k}$ is not divisible by $q$ for all $k$ and hence all the non-zero $c_k$ are also not divisible by $q$.

Suppose now that $q<n$. Write the base $q$ expansion of $n$ and decrease the leftmost digit by $1$, denoting the obtained number by $k$. Since $n>q$, the expansion had more than one digit and so $n-k$ is divisible by $q$. On the other hand, $n-p_i^{t_i}=p_i^{s_i}$ is not divisible by $q$, meaning that $k\neq p_i^{t_i}$ for all $i$. 

Also, $k$ is not a positive power of $q$. Indeed, assume the contrary. Then, by the definition of $k$, either $n=2k$, which is impossible because $n$ is not twice a prime power; or $n$ is the sum of $k$ and a larger positive power of $q$, which is impossible because $q$ is different from all $p_i$.

So, $k\neq p_i^{t_i}$ for all $i$ and $k$ is not a positive power of $q$. Hence, either $c_k=\binom{n}{k}$ (fourth case in the definition of $c_k$) or $c_k=q'\binom{n}{k}$ for some prime $q'\neq q$ (third case in the definition of $c_k$). Both numbers are not divisible by $q$ by Lucas's theorem because, by the definition of $k$, all the digits in the base $q$ expansion of $k$ are not greater than the corresponding digits of $n$. We have established that ${\rm GCD}(c_1,\dots,c_{n-1})$ divides $p_1p_2\dots p_\ell$.
\end{proof}

We are now ready to prove the rest of Theorem \ref{thm:main}.
\begin{proof}[\it Proof of Theorem \ref{thm:main}(a,c,d)]
For every $k=1,\dots,n-1$ pick the center $c_k$ of some $(k-1)$-dimensional face of $\Sigma_n$. The orbit of $c_k$ contains $\binom{n}{k}$ points and the stabilizer of $c_k$ in the permutation group is the subgroup $G_k := \mathfrak S_{k}\times \mathfrak S_{n-k}\subset\mathfrak S_n$. Denote $S_k:=\Sigma_k*\Sigma_{n-k}$. 

{\it Parts (a) and (c).} In (a) and (c) we have that $n$ is odd.
Since $n$ is odd, then one of the numbers $k$ and $n-k$ is even. The join of the antipodal map of the even dimensional factor and the identity map of the odd dimensional factor gives a $G_k$-equivariant map $S_k\to S_k$ of degree $-1$. By Corollary \ref{cor:join}, for any integer $d_k$ there exists a $G_k$-equivariant map $\Sigma_n\to \Sigma_n$ which is the identity in a neighborhood of $c_k$ and whose degree is $d_k$. By Lemma \ref{lem:mod}, there exists a $\mathfrak S_n$-equivariant map $\Sigma_n \to \Sigma_n$ of degree $$d=1 - \sum \limits_{k=1}^{n-1} d_k \frac{|\mathfrak S_n|}{|\mathfrak S_{k} \times \mathfrak S_{n-k}|}=1-\sum \limits_{k=1}^{n-1} d_k \binom{n}{k}.$$

If $n$ is not a prime power, by Lucas's theorem the GCD of the binomial coefficients in question is $1$. So, after an appropriate choice of $d_k$, the resulting degree $d$ can attain any integer value. This proves part (c) of the theorem.

Likewise, if $n$ is a power of a prime $p$, by Lucas's theorem the binomial coefficients in question are divisible by $p$.
On the other hand, $\binom{n}{n/p}$ is not divisible by $p^2$ by Kummer's theorem, and $\binom{n}{1}$ is not divisible by any prime except for $p$.
So, GCD of the coefficients equals exactly $p$ and after an appropriate choice of $d_k$, the resulting degree $d$ can attain any integer value which is $1$ modulo $p$. This finishes the proof of part (a) of the theorem, the ``only'' part of (a) was proved earlier.

{\it Part (d).} Let $d_k$ be some numbers whose existence is guaranteed by Lemma \ref{lem:representation_of_zero}. By Lemma \ref{lem:mod}, it is sufficient to prove that for each $k$ such that $d_k\neq 0$ there is a $G_k$-equivariant map $f_k:\Sigma_n\to \Sigma_n$ of degree $d_k$ which is the identity in a neighborhood of $c_k$. By Corollaries \ref{cor:join} and \ref{cor:join1}, this means that it is sufficient to find a $G_k$-equivariant map $S_k\to S_k$ of degree $-1$ or $d_k$. 

Finally, it is sufficient to find a $\mathfrak S_{k}$-equivariant map $\Sigma_k\to \Sigma_k$ or a $\mathfrak S_{n-k}$-equivariant map $\Sigma_{n-k}\to \Sigma_{n-k}$ of degree $-1$ or $d_k$. Indeed, using the the join operation with the identity map $\Sigma_{n-k}\to \Sigma_{n-k}$ or $\Sigma_k\to \Sigma_k$, respectively, we can get a required map $S_k\to S_k$.

Consider now all the possibilities for $k$.

$k$ is even: The antipodal map $\Sigma_k\to \Sigma_k$ is $\mathfrak S_{k}$-equivariant and of degree $-1$.

$k>1$ is odd and not a prime power: Then there is a $\mathfrak S_{k}$-equivariant map $\Sigma_k\to \Sigma_k$ of any degree, including $-1$, by part (c) of the theorem.

$k>1$ is odd and is a prime power with the base $p$: Then by the definition either $d_k=0$ and there is nothing to prove; or $d_k\equiv 1 \pmod{p}$. In the latter case, by part (a) of the theorem, there is a $\mathfrak S_{k}$-equivariant map $\Sigma_k\to \Sigma_k$ of degree $d_k$.

$k=1$ and $n=p^t+1$ for some prime $p$: Then $d_k\equiv 1 \pmod{p}$ by the definition. So, by part (a) of the theorem, there is a $\mathfrak S_{n-k}$-equivariant map $\Sigma_{n-k}\to \Sigma_{n-k}$ of degree $d_k$.

$k=1$ and $n\neq p^t+1$ for any prime $p$: Then $n-k$ is odd and not a prime power. So, by part (c) of the theorem, there is a $\mathfrak S_{n-k}$-equivariant map $\Sigma_{n-k}\to \Sigma_{n-k}$ of any degree including $-1$.
\end{proof}

\section{Appendix: Proof of Theorem \ref{thm:dold}}
Recall that $W_n$ denotes the affine span of $\Delta^{n-1}$. 
The group $\mathfrak S_n$ acts on $\Delta^{n-1}$ by permuting the barycentric coordinates and the action naturally extends to $W_n$.

The space $W_n$ is an $(n-1)$-dimensional
representation of the group $\mathfrak S_n$.
Its Euler class in the
cohomology $H^{n-1}(\mathfrak S_n; \pm \Z)$ is nonzero, this is easily
inferred from the direct calculation of the restriction of this Euler
class to the group $(\mathbb Z_p)^k$ contained in $\mathfrak S_n$.

Here $\pm \Z$ is the ring $\mathbb Z$ with the action of $\mathfrak S_n$
by the permutation sign. One may avoid using the coefficients
non-trivially acted upon by the group by considering the cohomology modulo
$p$ and, for $p>2$, passing to the subgroup $A_n\subset\mathfrak S_n$ of
even permutations. In this case the coefficients will be the finite
field $\mathbb Z_p$ and the Euler class in question remains nonzero. In
view of this, in what follows we put $G = \mathfrak S_n$ for $p=2$,
$n=p^k$, and $G=A_n$ for $p>2$, $n=p^k$.

Let $S(W_n)$ be the sphere of this representation. The spectral sequence
  starting with the page
\[
E_2^{x,y} = H^x(G; H^y(S(W_n); \mathbb Z_p))
\]
has $E_\infty^{x,y}$ the graded module associated to $H_G^*(S(W_n);
\mathbb Z_p)$. The only non-trivial differential of this spectral
sequence is generated by $d_{n-1}$ that non-trivially sends the
generator of $E_{n-1}(0,n-2) = H^{n-2}(S(W_n);\mathbb Z_p)$ to the Euler
class of $W_n$ in $E_{n-1}(n-1,0)=H^{n-1}(G;\mathbb Z_p)$.

The similar spectral sequence for $X$ starts with the page
\[
E_2^{x,y} = H^x(G; H^y(X; \mathbb Z_p)),
\]
and has $E_\infty^{x,y}$ the graded module associated to $H_G^*(X;
\mathbb Z_p)$. From the connectivity assumption (the triviality of the
reduced modulo $p$ cohomology of $X$ up to dimension $n-2$ would also suffice)
it follows that all differentials $d_r$ up to $r\le n-2$ of this
spectral sequences cannot map anything non-trivially to the bottom row. Hence the bottom row
$E_\infty^{x, 0}$ remains intact in dimensions $x\le n-1$.

If an equivariant map $X\to \mathbb R^n\setminus D_n$ existed, this
would imply an equivariant map $X\to S(W_n)$ by the projection and
normalization. Then the functoriality of the two spectral sequences
gives a contradiction with the established behavior of the differentials
$d_{n-1} : E_{n-1}^{0,n-2}\to E_{n-1}^{n-1,0}$ of the two spectral
sequences (one is non-trivial while the other is zero, the functorial map between the codomains
of the differentials is the identity).\qed

\bibliography{source}

\begin{thebibliography}{10}

\bibitem{akopyan2018convex}
A.~Akopyan, S.~Avvakumov, and R.~Karasev.
\newblock Convex fair partitions into an arbitrary number of pieces.
\newblock {\em arXiv preprint arXiv:1804.03057}, 2018.

\bibitem{alon1987splitting}
N.~Alon.
\newblock Splitting necklaces.
\newblock {\em Advances in Mathematics}, 63(3):247--253, 1987.

\bibitem{avvakumov2019envy}
S.~Avvakumov and R.~Karasev.
\newblock Envy-free division using mapping degree.
\newblock {\em Mathematika}, 67(1):36--53, 2021.

\bibitem{avvakumov2019stronger}
S.~Avvakumov, R.~Karasev, and A.~Skopenkov.
\newblock Stronger counterexamples to the topological {T}verberg conjecture.
\newblock {\em arXiv preprint arXiv:1908.08731}, 2019.

\bibitem{barany1981topological}
I.~B{\'a}r{\'a}ny, S.~B. Shlosman, and A.~Sz{\"u}cs.
\newblock On a topological generalization of a theorem of {T}verberg.
\newblock {\em Journal of the London Mathematical Society}, 2(1):158--164,
  1981.

\bibitem{blagojevic2014convex}
P.~V. Blagojevi{\'c} and G.~M. Ziegler.
\newblock Convex equipartitions via equivariant obstruction theory.
\newblock {\em Israel Journal of Mathematics}, 200(1):49--77, 2014.

\bibitem{gale1984equilibrium}
D.~Gale.
\newblock Equilibrium in a discrete exchange economy with money.
\newblock {\em International Journal of Game Theory}, 13(1):61--64, 1984.

\bibitem{karasev2014convex}
R.~Karasev, A.~Hubard, and B.~Aronov.
\newblock Convex equipartitions: the spicy chicken theorem.
\newblock {\em Geometriae Dedicata}, 170(1):263--279, 2014.

\bibitem{lovasz1978kneser}
L.~Lov{\'a}sz.
\newblock Kneser's conjecture, chromatic number, and homotopy.
\newblock {\em Journal of Combinatorial Theory, Series A}, 25(3):319--324,
  1978.

\bibitem{lucas1878theorie}
E.~Lucas.
\newblock Th{\'e}orie des fonctions num{\'e}riques simplement p{\'e}riodiques.
\newblock {\em American Journal of Mathematics}, pages 289--321, 1878.

\bibitem{meunier2019envy}
F.~Meunier and S.~Zerbib.
\newblock Envy-free cake division without assuming the players prefer nonempty
  pieces.
\newblock {\em Israel Journal of Mathematics}, 234(2):907--925, 2019.

\bibitem{ozaydin1987equivariant}
M.~\"Ozaydin.
\newblock Equivariant maps for the symmetric group.
\newblock 1987.

\bibitem{segal2018fairly}
E.~Segal-Halevi.
\newblock Fairly dividing a cake after some parts were burnt in the oven.
\newblock In {\em Proceedings of the 17th International Conference on
  Autonomous Agents and MultiAgent Systems}, pages 1276--1284. International
  Foundation for Autonomous Agents and Multiagent Systems, 2018.

\bibitem{shnirel1944certain}
L.~G. Shnirel'man.
\newblock On certain geometrical properties of closed curves.
\newblock {\em Uspekhi Matematicheskikh Nauk}, (10):34--44, 1944.

\bibitem{volovikov1996topological}
A.~Y. Volovikov.
\newblock On a topological generalization of the {T}verberg theorem.
\newblock {\em Mathematical Notes}, 59(3):324--326, 1996.

\bibitem{volovikov2000index}
A.~Y. Volovikov.
\newblock On the index of ${G}$-spaces.
\newblock {\em Sbornik: Mathematics}, 191(9):1259, 2000.

\end{thebibliography}
\bibliographystyle{abbrv}

\end{document}